\documentclass[a4paper,12pt]{amsart}
\usepackage[utf8]{inputenc}

\usepackage{amsmath,amssymb,amsthm}
\usepackage{bbm}
\usepackage{bm}
\usepackage{xspace}
\usepackage{csquotes}
\usepackage{standalone}

\usepackage[x11names,table]{xcolor}
\usepackage[naturalnames=false]{hyperref}
\usepackage{graphicx}
\graphicspath{ {img/} }
\usepackage{tikz}
\usetikzlibrary{calc}
\usetikzlibrary{cd}
\usepackage{csquotes}
\usepackage{booktabs}
\hypersetup{
	colorlinks = true,          
	urlcolor = DeepSkyBlue3, linkcolor = Chartreuse4, citecolor = DarkOrange2
}
\usepackage{fullpage}
\RequirePackage[backend=bibtex,isbn=false,url=false,maxbibnames=5,bibencoding=utf8]{biblatex}
\renewbibmacro{in:}{, }
\DeclareFieldFormat[misc]{title}{\mkbibquote{#1}}
\DeclareFieldFormat[article]{volume}{\mkbibbold{#1}}
\DeclareFieldFormat{url}{\textsc{url}: \href{#1}{#1}}
\DeclareFieldFormat{doi}{\textsc{doi}: \href{http://dx.doi.org/#1}{#1}}
\DeclareFieldFormat{eprint:arxiv}{arXiv:\href{https://arxiv.org/abs/#1}{#1}}
\addbibresource{biblio.bib}


\newcommand\RR{{\mathbb R}}

\newcommand\TT{{\mathbb T}}
\newcommand\ZZ{{\mathbb Z}}

\newcommand\cP{{\mathcal P}}

\newcommand\cH{{\mathcal H}}

\newcommand\cT{{\mathcal T}}

\newcommand\sX{{\mathsf X}}
\newcommand\sY{{\mathsf Y}}

\newcommand\0{{\mathbf 0}}
\newcommand\1{{\mathbf 1}}

\newcommand\torus[1]{\RR^{#1}/\RR\1} 

\DeclareMathOperator\conv{conv}
\DeclareMathOperator\tconv{tconv}

\DeclareMathOperator\argmin{arg\,min}

\DeclareMathOperator\proj{proj}



\newcommand\trop{\text{trop}}

\renewcommand{\vec}[1]{\mathbf{#1}}

\newtheorem{theorem}{Theorem}
\newtheorem{lemma}[theorem]{Lemma}
\newtheorem{proposition}[theorem]{Proposition}
\newtheorem{corollary}[theorem]{Corollary}

\theoremstyle{definition}
\newtheorem{definition}[theorem]{Definition}
\newtheorem{example}[theorem]{Example}
\theoremstyle{remark}

\newtheorem{remark}[theorem]{Remark}

\definecolor{tolorange}{RGB}{238,119,51}  
\definecolor{tolblue}{RGB}{0,119,187}     
\definecolor{tolgreen}{RGB}{0,153,136}    
\definecolor{tolpurple}{RGB}{136,34,85}   
\definecolor{ibmyellow}{RGB}{255, 176, 0} 
\definecolor{darkgray}{RGB}{64,64,64} 

\tikzstyle{blackdot} = [circle,draw=black,fill=black,scale=0.5]

\usepackage{expl3}
\ExplSyntaxOn
\newcommand \breakDOI[1]
 {
  \tl_map_inline:nn { #1 } { \href{http://dx.doi.org/#1}{##1} \penalty0 \scan_stop: }
 }
\ExplSyntaxOff

\DeclareFieldFormat{doi}{%
  \mkbibacro{DOI}\addcolon\space
  {\breakDOI{#1}}}



 \usepackage[colorinlistoftodos,bordercolor=orange,backgroundcolor=orange!20,linecolor=orange,textsize=tiny]{todonotes}

\title{Tropical convexity in location problems}

\author{Andrei Com\u{a}neci}

\address{
	Institut f\"{u}r Mathematik,
    Technische Universit\"{a}t Berlin \\
	\texttt{comaneci@math.tu-berlin.de}	
}

\subjclass[2020]{
  14T90,   
  26B25,   
  52A30,   
  90B85,   
  92B10    
} 

\keywords{tropical convexity; tropically quasiconvex function; monotonic norms; tropical $L^p$-norm; tropically convex consensus trees}

\begin{document}

\begin{abstract}
  We investigate location problems whose optimum lies in the tropical convex hull of the input points.
  Firstly, we study geodesically star-convex sets under the asymmetric tropical distance and introduce the class of tropically quasiconvex functions whose sub-level sets have this shape.
  The latter are related to monotonic functions.
  Then we show that location problems whose distances are measured by tropically quasiconvex functions as before give an optimum in the tropical convex hull of the input points.
  We also show that a similar result holds if we replace the input points by tropically convex sets.
  Finally, we focus on applications to phylogenetics presenting properties of consensus methods arising from our class of location problems.
\end{abstract}

\maketitle

\section{Introduction}

There is a recent interest in studying location problems in tropical geometry, especially in the use of tropical methods to data analysis.
Maybe the first article to promote such problems with a view towards ``tropical statistics'' is the work of Lin et al. \cite{LSTY:2017}.
They showed that tropical convexity in tree spaces has some better properties than the geometry of Billera, Holmes, and Vogtmann (BHV) \cite{BHV}.
This encouraged them to propose location estimators based on the symmetric tropical distance that could potentially exploit tropical convexity.
In particular, this would give a tropical approach to the consensus problem from phylogenetics \cite{Bryant:2003}.

The connection for the proposed location statistics to tropical convexity was not well understood.
For example, they noticed that tropical Fermat--Weber points can lie outside the tropical convex hull of the input points \cite[Example~26]{LSTY:2017}, although it was found later that one can find Fermat--Weber points inside the tropical convex hull \cite[Lemma~3.5]{Page+Yoshida+Zhang:2020}.
However, the unclear connection makes it difficult to obtain solutions that can be interpreted in the phylogenetic setting; see also~\cite{Lin+Yoshida:2018}.

Recently, we could show that studying the Fermat--Weber problem using an \emph{asymmetric} distance function leads to a better explanation in terms of tropical convexity~\cite{TropMedian}.
In particular, it provides a clear approach based on tropical convexity to the consensus problem from phylogenetics.
Moreover, various desirable properties of consensus methods were obtained by exploiting tropical convexity.
In fact, the good properties were solely due to tropical convexity and not the particular distance function which motivates the search for other methods with similar properties.


In this paper, we focus on location problems that have the potential of exploiting tropical convexity.
More specifically, we care of those location estimators that will belong to the tropical convex hull of the input points.
Such estimators are based on distances that reflect the tropical structure of the space and can be seen as a counterpart to similar studies regarding location problems and ordinary convexity.


Significant work was done for understanding geometric properties of location problems and their relationship to ordinary convexity.
The case of Chebyshev centers dates back to the 60s in the work of Garkavi~\cite{Garkavi:1964} and Klee~\cite{Klee:1960}.
More general location problems in a normed space were studied by Wendell and Hurter~\cite{Wendell+Hurter:1973}, while a focus on geometric properties of Fermat--Weber problems with varying distances is covered by Durier and Michelot~\cite{Durier+Michelot:1985}.
What is more, it was shown that finding an optimal solution in the (ordinary) convex hull for every set of points is equivalent to having an inner product space in three dimensions or more; a general form of this result was obtained by Durier~\cite{Durier:1994,Durier:1997}.




The results mentioned above show a strong relationship between ordinary convexity and a Euclidean structure.
Tropical convexity, on the other hand, it is related to the lattice structure of $(\RR^n,\leq)$.
Hence, we have to focus on ``monotonic'' distances.
To interpret geometrically monotonic functions in the quotient space $\torus{n}$, we notice that all sub-level sets share a similarity: they are geodesically star-convex with respect to the asymmetric tropical distance.
The latter can be seen by remarking that geodesic segments are images of order segments in $(\RR^n,\leq)$.
The resulting sets, called $\triangle$-star-convex, and functions, called $\triangle$-star-quasiconvex, are discussed in sections~\ref{sec:starConv} and~\ref{sec:tropQuasiconv}, respectively.

In section~\ref{sec:locProb} we focus on location problems in which distances to the sites are measured by $\triangle$-star-quasiconvex functions.
We show that this setting guarantees optimal locations in the tropical convex hull of the input.
We will see that the triangle inequality does not play any role, which emphasizes the differences between tropical and ordinary convexity.
Further, this setting allows for very general location problems where dissimilarities are not necessarily distances; triangle inequality is generally assumed in location science when dealing with geographic location~\cite{LocationScience:2019}, but it is not reasonable for more general data~\cite{Tversky:1977} and never assumed in the construction of M-estimators~\cite[\textsection 3.2]{Huber:1981}.



We have further a few examples of location problems from the literature that end in our setting.
In particular, location problems involving the symmetric and asymmetric tropical distances.
However, the former case might contain cases where some optima are outside the tropical convex hull of the input.
So what is the precise distinction between the symmetric and the asymmetric tropical distances that causes the above behaviour?
We show that \emph{strict} $\triangle$-star-convexity is the answer.
This motivates that study of regularized versions discussed in \textsection\ref{subsec:regular}.

We briefly show in section~\ref{sec:locProbTropConvSites} that we can extend the results to the case when the sites are tropically convex sets.
Then section~\ref{sec:tropConsensus} deals with the main application to phylogenetics: the tropical approach to consensus methods.
Our general setting provides a large class of tropically convex consensus methods as defined in \cite[\textsection 5]{TropMedian}.
Furthermore, we enlarge the list of desirable properties of these consensus methods that were given in the previously cited work.
Finally, we conclude with section~\ref{sec:conclusion} consisting of highlights and possible directions for future research.

\section{Tropical convexity} \label{sec:tropConv}


The purpose of this section is to fix the notation and emphasize the basic properties of tropically convex sets that will be used later.
One can consult the book of Joswig~\cite{ETC} for more details.
We will use both semirings $\TT^{\min}=\left(\RR\cup\{\infty\},\wedge,+\right)$ and $\TT^{\max}=\left(\RR\cup\{-\infty\},\vee,+\right)$ where $x\wedge y=\min(x,y)$ and $x\vee y=\max(x,y)$.
They are isomorphic under the map $x\mapsto -x$, but it is better to be seen as dual to each other.
This duality will play an important role later similar to the relationship between $\max$-tropical polytopes and $\min$-tropical hyperplanes \cite[Chapter~6]{ETC}.



Since our applications deal with points of finite entry, we will define tropical geometric objects in $\RR^n$ and $\torus{n}$.
It also exploits the common set of $\TT^{\max}$ and $\TT^{\min}$ and we can make use of the vector space structure. 

A \emph{$\min$-tropical cone} $K\subset\RR^n$ is a set closed under $\min$-tropical linear combinations: $(x+\lambda\1)\wedge (y+\mu\1)\in K$ for all $x,y\in K$ and $\lambda,\mu\in\RR$.
The image of a $\min$-tropical cone in $\torus{n}$ is called a \emph{$\min$-tropically convex set}.
A common example is the $\min$-tropical hyperplane with apex $v$ which is the set $H^{\min}_v=\left\{x\in\torus{n}:|\argmin_j(x_j-v_j)|\geq 2\right\}$.
The $\max$-tropical cones and $\max$-tropically convex sets are defined similarly, replacing $\min$ by $\max$ in the previous definitions.
One can also see them as images of $\min$-tropical cones and $\min$-tropically convex sets under $x\mapsto -x$.

The $\min$-tropical convex hull of two points $a,b\in\torus{n}$ will be denoted by $[a,b]_{\max}$ and is called the \emph{$\min$-tropical segment} between $a$ and $b$.
We will also use the notation $(a,b)_{\min}=[a,b]_{\min}\setminus\{a,b\}$ for the \emph{open} $\min$-tropical segment between $a$ and $b$.
Similarly, we define $[a,b]_{\max}$ and $(a,b)_{\max}$.


The \emph{$\min$-tropical convex hull} of a set $A\subset\torus{n}$ is the smallest $\min$-tropically convex set containing $A$ and we denote it by $\tconv^{\min}(A)$.
It can be related to the $\max$-tropical semiring by \cite[Proposition~5.37]{ETC}.
For this we need to introduce the $\max$-tropical sector $S_i^{\max}=\{x\in\torus{n}:x_i\geq x_j\ \forall j\in[n]\}=\{x\in\torus{n}:i\in\argmin_j x_j\}$.
Then \cite[Proposition~5.37]{ETC} says that $x$ belongs to $\tconv^{\min}(A)$ if and only if for each $i\in[n]$ there exists $a_i\in A$ such that $x\in a_i+S_i^{\max}$.
For the case of $\max$-tropically convex hull just reverse $\min$ with $\max$.

We say that a point $a$ of a $\min$-tropically convex set $A$ is \emph{$i$-exposed} if $\left(a+S_i^{\min}\right)\cap A=\{a\}$.
If a point is $i$-exposed for some $i\in[n]$, then we simply call it \emph{exposed}.




Since the order $\leq$ on $\RR^n$ is strongly related to tropical convexity, we will focus on monotonic function.
We say that a function $f:X\to\RR$, defined on a subset $X$ of $\RR^n$, is \emph{increasing} if for every $x, y\in X$ with $x\leq y$ we have $f(x)\leq f(y)$.
We call $f$ \emph{strictly increasing} if $f(x)<f(y)$ whenever $x\leq y$ and $x\neq y$. 

For $a,b\in\RR^n$ and $a\leq b$, we denote by $[a,b]_{\leq}$ the set of points $x\in\RR^n$ such that $a\leq x\leq b$ and call it the \emph{order segment} between $a$ and $b$.
It can also be written as a box: $[a,b]_{\leq}=[a_1,b_1]\times\dots\times[a_n,b_n]$.
Its image in $\torus{n}$ is a \emph{polytrope}, i.e. it is both $\min$- and $\max$-tropically convex \cite[\textsection 6.5]{ETC}, which we call a \emph{box polytrope}.
A particular case is presented in the following example.



Consider the asymmetric distance $d_\triangle(a,b)=\sum_i(b_i-a_i)-n\min_j(b_j-a_j)$ defined on $\torus{n}$ \cite{TropMedian}.
We are interested in geodesic segments under this distance, which are portrayed in Figure~\ref{fig:triangle_seg}.
This is different from the geodesic convexity discussed in \cite[\textsection 5.3]{ETC} which focuses on the symmetric tropical distance.

\begin{definition}
    For two points $a,b\in\torus{n}$ we define the (oriented) \emph{geodesic segment} between $a$ and $b$ under $d_\triangle$ as $[a,b]_\triangle:=\left\{x\in\RR^n:d_\triangle(a,x)+d_\triangle(x,b)=d_\triangle(a,b)\right\}$.
\end{definition}

\begin{remark} \label{rmk:geoSegment}

\begin{figure}
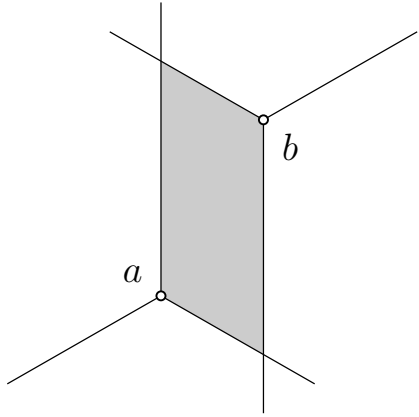

    \centering
         \includestandalone[width=0.35\textwidth]{triangleSeg}
    \caption{The geodesic segment $[a,b]_\triangle$ is marked with grey}
    \label{fig:triangle_seg}
\end{figure}


The geodesic segment $[a,b]_\triangle$ is a (box) polytrope.
To see this, we point out that $[a,b]_\triangle=(a+S_i^{\min})\cap(b+S_i^{\max})$ where $i$ is any index from $\argmin_j(b_j-a_j)$; the equality can be also seen in Figure~\ref{fig:triangle_seg}.
What is more, if we choose representatives $a$ and $b$ such that $\min_j(b_j-a_j)=0$, then $[a,b]_\triangle$ is the image of $[a,b]_{\leq}$ in $\torus{n}$.
The $\min$-tropical vertices $[a,b]_\triangle$ are of the form $v_j=b-(b_j-b_i+a_i-a_j)e_j=b-\left(b_j-a_j-\min_\ell(b_\ell-a_\ell)\right)e_j$ for $j\in[n]$

\end{remark}

\begin{remark} \label{rmk:basicGeoSeg}
    The set $[a,b]_\triangle$ contains the ordinary segment $[a,b]$ but also the $\min$- and $\max$-tropical segments between $a$ and $b$.
    What is more, for every $c\in\torus{n}$ the $\min$-tropical segment between $a$ and $b$ is contained in $[c,a]_\triangle\cup[c,b]_\triangle$.

    To see the latter statement, we take arbitrary representatives modulo $\RR\1$ for $a$ and $b$ and show that $a\wedge b\in[c,a]_\triangle\cup[c,b]_\triangle$.
    Let $i\in\argmin_j[(a_j\wedge b_j)-c_j]$.
    Without loss of generality, we can assume that $a_i\wedge b_i=a_i$.
    Thus, $a\wedge b\in (c+S_i^{\min})\cap(a+S_i^{\max})=[c,a]_\triangle$.
\end{remark}

The \emph{canonical coordinates} of a point $x\in\torus{n}$ are the entries of the $\widehat x\in\RR^n$ defined by $\widehat x= x-\left(\min_j x_j\right)\1$.
This is a representative of $x$ modulo $\RR\1$ such that all its entries are non-negative and at least one entry is $0$.

\begin{definition}
    We say that $K$ is a \emph{strictly $\min$-tropically convex cone} if $K$ is a $\min$-tropically convex cone and for every $a,b\in K$ such that $a\wedge b$ is different from $a$ and $b$ modulo $\RR\1$, then $a\wedge b$ belongs to the interior of $K$.

    We say that a subset of $\torus{n}$ is \emph{strictly $\min$-tropically convex} if it is the image of a strictly $\min$-tropically convex cone under the canonical projection $\RR^n\to\torus{n}$.
\end{definition}

\begin{remark} \label{rmk:strictTropConvSeg}
    A subset $L$ of $\torus{n}$ is strictly $\min$-tropically convex if all the points of the open $\min$-tropical segment $(a,b)_{\min}$ belong to the interior of $L$, where $a$ and $b$ are distinct points in $L$.
\end{remark}

\begin{proposition} \label{prop:fullDimStrictTropConv}
    Any strictly $\min$-tropically convex set is a singleton or its closure coincides with the closure of its interior.
    Moreover, all of its boundary points are exposed.
\end{proposition}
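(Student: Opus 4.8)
The plan is to derive both assertions from Remark~\ref{rmk:strictTropConvSeg} --- for distinct $a,b\in L$ the open segment $(a,b)_{\min}$ lies in $\operatorname{int}(L)$ --- together with the elementary fact that whenever $a\neq b$ in $\torus n$ the $\min$-tropical segment $[a,b]_{\min}$ is a non-degenerate polygonal path, each of whose endpoints is a limit of points of $(a,b)_{\min}$.

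For the dichotomy I would assume $L$ is not a singleton and fix distinct $a,b\in L$; then $\emptyset\neq(a,b)_{\min}\subseteq\operatorname{int}(L)$, so $\operatorname{int}(L)\neq\emptyset$. For an arbitrary $p\in L$, at least one of $a,b$ differs from $p$, say $a$, and since $p$ is a limit of points of $(p,a)_{\min}\subseteq\operatorname{int}(L)$ we obtain $p\in\overline{\operatorname{int}(L)}$. Hence $L\subseteq\overline{\operatorname{int}(L)}$, and passing to closures (the reverse inclusion being immediate) gives $\overline L=\overline{\operatorname{int}(L)}$.

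For the claim about boundary points I would argue by contradiction: suppose $p\in\partial L$ is $i$-exposed for no $i\in[n]$. Then for each $i$ there is $q_i\in L$ with $q_i\neq p$ and $q_i\in p+S_i^{\min}$, equivalently $p\in q_i+S_i^{\max}$. Setting $\lambda_i:=p_i-(q_i)_i$ and $q_i':=q_i+\lambda_i\1$, the sector conditions read $p\le q_i'$ with $(q_i')_i=p_i$, so $p=\bigwedge_{i\in[n]}q_i'$ and in particular $p\in\tconv^{\min}(\{q_1,\dots,q_n\})\subseteq L$ by \cite[Proposition~5.37]{ETC}. I would then choose $I\subseteq[n]$ minimal with $p=\bigwedge_{i\in I}q_i'$. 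It cannot be that $I=\{k\}$, since then $p\equiv q_k$ modulo $\RR\1$, contradicting $q_k\neq p$; so $|I|\ge 2$. Picking $k\in I$ and setting $r:=q_k'$, $s:=\bigwedge_{i\in I\setminus\{k\}}q_i'$ --- both in $L$ by tropical convexity --- I would use $q_i'\ge p$ entrywise, $(q_k')_k=p_k$ and the minimality of $I$ to check that $r$, $s$ and $p=r\wedge s$ are pairwise distinct modulo $\RR\1$, i.e. $p\in(r,s)_{\min}$. Then Remark~\ref{rmk:strictTropConvSeg} puts $p\in\operatorname{int}(L)$, contradicting $p\in\partial L$. (If $L$ is a singleton its point is $i$-exposed for every $i$, so nothing is to be shown.)

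The step I expect to be the main obstacle is this last verification --- confirming that the minimal representation $p=r\wedge s$ is non-degenerate, i.e. $p\not\equiv r$ and $p\not\equiv s$ modulo $\RR\1$ --- since that is precisely the configuration to which strict $\min$-tropical convexity applies. It follows from the normalisation $(q_i')_i=p_i$ and the bounds $q_i'\ge p$ (so that $r\wedge s=p$ holds exactly, while $r\equiv p$ would force $q_k\equiv p$); the rest is routine bookkeeping with the sector inequalities.
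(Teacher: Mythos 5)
Your proof is correct and follows essentially the same route as the paper: both parts are reduced to Remark~\ref{rmk:strictTropConvSeg}, with a non-exposed point shown to lie on an open $\min$-tropical segment between two points of the set and hence to be interior. The only difference is that the paper merely asserts the existence of such a segment, whereas you verify it explicitly via the meet $p=\bigwedge_{i} q_i'$ of translated witnesses and a minimal index set $I$; that verification (in particular the check that $p\not\equiv r$ and $p\not\equiv s$ modulo $\RR\1$) is sound.
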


\begin{proof}
    The first part results from Remark~\ref{rmk:strictTropConvSeg}.
    For the second part, consider $v$ which is not exposed.
    Then there exist $p,q$ in the strictly $\min$-tropically convex set such that $v\in(p,q)_{\min}$.
    According to the same remark, $v$ is an interior point.
\end{proof}

\section{$\triangle$-star-convex sets} \label{sec:starConv}

\begin{definition}
    A \emph{$\triangle$-star-convex} set with kernel $v$ is a non-empty set $K\subseteq\torus{n}$ such that for every point $w\in K$ we have $[v,w]_\triangle\subseteq K$.
    We call $K$ \emph{strictly} $\triangle$-star-convex if $[v,w]_\triangle\setminus\{w\}$ belongs to the interior of $K$ for every $w\in K$.
\end{definition}

Since $[v,w]_\triangle$ contains the ordinary segment $[v,w]$, we conclude that $\triangle$-star-convex sets are also star-convex in the ordinary sense.
We show now that $\triangle$-star-convex sets are $\min$-tropically convex.

\begin{proposition}
    Any $\triangle$-star-convex set is $\min$-tropically convex.
\end{proposition}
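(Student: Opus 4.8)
The plan is to take a $\triangle$-star-convex set $K$ with kernel $v$ and two arbitrary points $a, b \in K$, and show that the $\min$-tropical segment $[a,b]_{\min}$ lies entirely in $K$; since $a,b$ were arbitrary, this gives $\min$-tropical convexity. The key tool is the second statement of Remark~\ref{rmk:basicGeoSeg}, which says that for \emph{every} $c \in \torus{n}$ the $\min$-tropical segment between $a$ and $b$ is contained in $[c,a]_\triangle \cup [c,b]_\triangle$. Specializing $c = v$, the kernel of $K$, we obtain
\[
  [a,b]_{\min} \subseteq [v,a]_\triangle \cup [v,b]_\triangle .
\]

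Now I would invoke the defining property of $\triangle$-star-convexity directly: since $a \in K$ and $v$ is a kernel, $[v,a]_\triangle \subseteq K$; likewise $[v,b]_\triangle \subseteq K$ because $b \in K$. Hence the union $[v,a]_\triangle \cup [v,b]_\triangle$ is contained in $K$, and therefore so is $[a,b]_{\min}$. This shows that $K$ is closed under taking $\min$-tropical segments of pairs of its points. Because the $\min$-tropical convex hull of a set is generated by iterating the segment operation (equivalently, a set is $\min$-tropically convex precisely when it contains the $\min$-tropical segment between any two of its points), we conclude that $K$ is $\min$-tropically convex.

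The only real content is the reduction to Remark~\ref{rmk:basicGeoSeg}, so the main (minor) obstacle is making sure the representatives modulo $\RR\1$ are handled consistently: the remark is phrased for arbitrary representatives of $a$ and $b$ and shows $a \wedge b \in [v,a]_\triangle \cup [v,b]_\triangle$, and one should note that this suffices, since $[a,b]_{\min}$ consists exactly of the points $(a + \lambda\1) \wedge (b + \mu\1)$ modulo $\RR\1$, each of which is again a $\min$-tropical meet of representatives of $a$ and $b$ and hence covered by applying the remark to those representatives. No use of the triangle inequality or of strictness is needed; the argument is purely combinatorial via the sector description $[c,a]_\triangle = (c + S_i^{\min}) \cap (a + S_i^{\max})$ already recorded in Remark~\ref{rmk:geoSegment}.
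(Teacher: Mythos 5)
Your proof is correct and is essentially the paper's own argument: both apply Remark~\ref{rmk:basicGeoSeg} with $c=v$ to get $[a,b]_{\min}\subseteq [v,a]_\triangle\cup[v,b]_\triangle$ and then use the defining property of $\triangle$-star-convexity to conclude this union lies in $K$. The extra remarks about representatives modulo $\RR\1$ and pairwise segments sufficing for tropical convexity are fine but add nothing beyond what the paper leaves implicit.
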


\begin{proof}
    Let $K$ be a $\triangle$-star-convex set with kernel $v$ and $a,b$ arbitrary points in $K$.
    According to Remark~\ref{rmk:basicGeoSeg}, we have $[a,b]_{\min}\subseteq [v,a]_\triangle\cup [v,b]_\triangle$.
    The latter set is contained in $K$ due to its $\triangle$-star-convexity.
\end{proof}

However, $\triangle$-star-convex sets might not be $\max$-tropically convex.
For example, the image of the regular simplex $\Delta_n=\conv\{e_1,\dots,e_n\}$ in $\torus{n}$ is $\triangle$-star-convex but not $\max$-tropically convex.

\begin{example}
    \begin{figure}
    \centering

    \begin{tabular}{p{0.3\textwidth}p{0.3\textwidth}p{0.3\textwidth}}
        \includestandalone[width=0.3\textwidth]{tropicalHyperplane} & \includestandalone[width=0.3\textwidth]{tropical_Lp_balls} &
        \includestandalone[width=0.3\textwidth]{complicatedTriangStar} \\
        \small (a) $\min$-tropical hyperplane  & \small (b) Tropical $L^p$ balls for ${p\in\{1/4,1,2,\infty\}}$ & \small (c) More complicated $\triangle$-star-convex set
    \end{tabular}
    
        \caption{$\triangle$-star-convex sets}
        \label{fig:triangStarConvSets}
    \end{figure}

    One can find examples of $\triangle$-star-convex sets in Figure~\ref{fig:triangStarConvSets}.
    Picture~(a) shows a $\min$-tropical hyperplane $H^{\min}_v$ which is $\triangle$-star-convex with kernel $v$---the apex.

    Picture~(b) displays the unit balls for tropical $L^p$ norms, which will be defined in Example~\ref{eg:tropicalLp}.
    They are nested increasingly with respect to $p$; the outer one corresponds to the tropical $L^\infty$ norm and is the only one that is not strictly $\triangle$-star-convex.
    One can recognize the triangle as the unit ball for the asymmetric tropical distance $d_\triangle$.
    The $\min$-tropical hyperplane with apex at the origin (the kernel of the $\triangle$-star-convex sets) is dotted.

    Picture~(c) shows a more complicated $\triangle$-star-convex sets.
    This case is not pure dimensional, the tropically exposed points do not form a closed set.
    Moreover, it is neither convex in the ordinary sense, nor strictly $\triangle$-star-convex.
\end{example}

\begin{proposition}
    Let $K$ be a $\triangle$-star-convex set with kernel $v$ such that $K\neq\{v\}$.
    Then $K$ is strictly $\triangle$-star-convex if and only if $K$ is strictly $\min$-tropically convex and $v$ is an interior point of $K$.
\end{proposition}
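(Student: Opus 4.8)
The plan is to prove the two implications separately; the forward one is short, while the converse requires dissecting the geodesic segment $[v,w]_\triangle$.

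For the forward implication, assume $K$ is strictly $\triangle$-star-convex with kernel $v$. Since $K\neq\{v\}$ there is $w\in K$ with $w\neq v$, and then $v\in[v,w]_\triangle\setminus\{w\}$, so $v\in\operatorname{int}(K)$. To see that $K$ is strictly $\min$-tropically convex I would take distinct $a,b\in K$ and $x\in(a,b)_{\min}$; by Remark~\ref{rmk:basicGeoSeg}, $[a,b]_{\min}\subseteq[v,a]_\triangle\cup[v,b]_\triangle$, so $x$ lies in one of these two geodesic segments, and since $x\neq a$ and $x\neq b$ it lies in $[v,a]_\triangle\setminus\{a\}$ or $[v,b]_\triangle\setminus\{b\}$, both contained in $\operatorname{int}(K)$. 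Remark~\ref{rmk:strictTropConvSeg} then finishes this direction.

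For the converse, assume $K$ is strictly $\min$-tropically convex with $v\in\operatorname{int}(K)$; I must show that for every $w\in K$ we have $[v,w]_\triangle\setminus\{w\}\subseteq\operatorname{int}(K)$ (note $K$ is not a singleton, since it contains $v$ and $K\neq\{v\}$). Fix $w$, normalise representatives so that $\min_j(w_j-v_j)=0$, so that $[v,w]_\triangle$ is the image of the box $[v,w]_{\leq}$ with $\min$-tropical vertices $u_1,\dots,u_n$ as in Remark~\ref{rmk:geoSegment}, and set $S=\{j:v_j<w_j\}$. Take $x\in[v,w]_\triangle$ with $x\neq w$ and argue by the smallest face of $[v,w]_\triangle$ whose relative interior contains $x$. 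If $x=v$ we are done by hypothesis. If $x$ lies in the relative interior of a face of dimension $\geq 1$, that face is a lower-dimensional box, and moving $x$ along a coordinate direction in which the face is not constant produces a $\min$-tropical segment with endpoints in that face (hence in $K$) and $x$ in its relative interior, so $x\in\operatorname{int}(K)$ by Remark~\ref{rmk:strictTropConvSeg}. If $x$ is a vertex of $[v,w]_\triangle$ other than $v$ and $w$ having at least two coordinates in $S$ equal to their $v$-value, say on the index set $T\subseteq S$ with $|T|\geq 2$, then $x=\bigwedge_{\ell\in T}u_\ell$, and splitting off one index $j_1\in T$ exhibits $x$ in the open $\min$-tropical segment between $u_{j_1}$ and $\bigwedge_{\ell\in T\setminus\{j_1\}}u_\ell$, both points of $[v,w]_\triangle\subseteq K$, so $x\in\operatorname{int}(K)$ again.

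The only points not covered this way are the $\min$-tropical vertices $u_j$ with $v_j<w_j$. These are $\min$-tropically extreme in $[v,w]_\triangle$, so they cannot be put into an open $\min$-tropical segment whose endpoints both lie in $[v,w]_\triangle$, and this is exactly the step where the hypotheses $v\in\operatorname{int}(K)$, strict $\min$-tropical convexity of $K$, and the $\triangle$-star property must all be used. By Proposition~\ref{prop:fullDimStrictTropConv} it is enough to verify that $u_j$ is not exposed in $K$; and since $[v,w]_\triangle\subseteq K$, while a short computation with the order-segment description shows that the only $i$-exposed point of the polytrope $[v,w]_\triangle$ is $u_i$, the task reduces to excluding that $u_j$ is $j$-exposed in $K$. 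The way I expect this to go is to combine $v\in\operatorname{int}(K)$ with the fact that strict $\min$-tropical convexity prevents any flat face of $[v,w]_\triangle$ from lying on $\partial K$, together with $\triangle$-star-convexity, to produce a point $z\in K$ with $u_j\in(v,z)_{\min}$, which immediately yields $u_j\in\operatorname{int}(K)$. Showing that such a $z$ exists — equivalently, that $K$ really does protrude past the vertex $u_j$ — is the main difficulty of the argument.
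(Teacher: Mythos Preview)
Your forward direction is correct and matches the paper's argument.

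For the converse, your case analysis handling $v$, points in the relative interior of positive-dimensional faces, and box vertices with $|T|\geq 2$ is correct (and in fact more explicit than the paper, which simply observes that non-exposed points of $[v,w]_\triangle$ are automatically non-exposed in $K$, hence interior by Proposition~\ref{prop:fullDimStrictTropConv}). However, the crucial case of the $\min$-tropical vertices $u_j$ is left incomplete: you correctly identify it as ``the main difficulty'' and sketch a hoped-for strategy (producing $z\in K$ with $u_j\in(v,z)_{\min}$), but you do not carry it out. This is a genuine gap, not a routine verification.

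The paper closes this gap with a concrete construction that is somewhat different from what you propose. Normalising $v=\0$ and picking $i$ with $w_i=0$, the midpoint $(u_j+w)/2$ lies in the open $\min$-tropical segment $(u_j,w)_{\min}$, hence in $\operatorname{int}(K)$ by strict $\min$-tropical convexity. Perturbing, the point $c=(u_j+w)/2-\delta e_i$ lies in $K$ for small $\delta>0$. Now $\triangle$-star-convexity gives $[v,c]_\triangle\subseteq K$, and a direct computation shows $u_j\in[v,c]_\triangle$ while $c-u_j=(w_j/2)\,e_j-\delta\,e_i$ is not parallel to any $e_k$ modulo $\RR\1$ when $n\geq 3$. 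Thus $u_j$ is not a $\min$-tropical vertex of $[v,c]_\triangle$, so it is non-exposed there, hence non-exposed in $K$, hence interior. The case $n=2$ is handled separately: the only exposed points of $[v,w]_\triangle$ are $v$ and $w$, so the hypothesis $v\in\operatorname{int}(K)$ suffices. Note in particular that the paper does \emph{not} put $u_j$ on an open $\min$-tropical segment from $v$; it puts $u_j$ inside a \emph{new} geodesic segment $[v,c]_\triangle$ and uses non-exposedness there. This is precisely where the $\triangle$-star-convexity hypothesis (and not merely strict $\min$-tropical convexity) enters the converse in an essential way.
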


\begin{proof}
    Firstly, assume that $K$ is strictly $\triangle$-star-convex.
    For every $a,b\in K$ the $\min$-tropical segment $[a,b]_{\min}$ is a subset of $[v,a]_\triangle\cup[v,b]_\triangle$.
    Therefore, all of the points of $[a,b]_{\min}$ with the exception of $a$ and $b$ must be in the interior of $K$.
    Hence, $K$ is strictly $\min$-tropically convex.
    The fact that $v$ is an interior point is clear from the definition and our assumption that $K\neq\{v\}$.

    Conversely, assume that $K$ is strictly $\min$-tropically convex and $v$ is an interior point of $K$.
    We consider $w\in K\setminus\{v\}$ and we show that all points of $[v,w]_\triangle\setminus\{w\}$ are in the interior of $K$.
    The result is clear for non-exposed points of $[v,w]_\triangle$ as we assumed $K$ is strictly $\min$-tropically convex.
    Hence, let $u$ be an exposed point of $[v,w]_\triangle$ distinct from $w$.
    According to the discussion from Remark~\ref{rmk:geoSegment}, $u=w-(w_j-w_i)e_j$ where $i\in\argmin_k w_k$ and $j\notin\argmin_k w_k$.
    Since $(u+w)/2$ belongs to the interior of the tropical segment $[u,w]_{\min}$ and $K$ is strictly $\min$-tropically convex, then $(u+w)/2$ is an interior point of~$K$.
    Thus, for small $\delta>0$, the point $c=(u+w)/2-\delta e_i$ belongs to $K$.
    
    However, $u\in[v,c]_\triangle=S_i^{\min}\cap(c+S_i^{\max})$ as $c-u=(w-u)/2-\delta e_i=({w_j-w_i})e_j/2-\delta e_i$.
    But $u$ cannot be an exposed point of $[v,c]_\triangle$ as $c-u$ is not parallel to a vector $e_k$ for $k\in[n]$ unless $n=2$. 
    Consequently, $u$ must be an interior point of $K$ from the strict $\min$-tropical convexity of $K$, when $n\geq 3$.
    
    For the case $n=2$, we could have noticed that the exposed points of $[v,w]_\triangle$ are $v$ and~$w$, so $u$ can only be equal to $v$.
    But $v$ was already assumed to be interior.
\end{proof}

\begin{remark}
    The proof above shows that the assumption that $v$ is an interior point of $K$ is superfluous for the converse when $n\geq 3$.
\end{remark}

\begin{lemma} \label{lem:extStarConv}
    If $K$ is strictly $\triangle$-star-convex with kernel $v$, then any exposed point of $K$ from $v+S_i^{\min}$ is $i$-exposed.
\end{lemma}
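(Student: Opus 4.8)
The plan is a proof by contradiction. Assume $w$ is an exposed point of $K$ with $w\in v+S_i^{\min}$, and suppose it fails to be $i$-exposed; I will force $w$ into the interior of $K$, contradicting exposedness. (The degenerate case $K=\{v\}$ is trivial, as then $v=w$ is $i$-exposed for every $i$, so assume $K\neq\{v\}$.)

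First I would spell out the failure of $i$-exposedness: since $(w+S_i^{\min})\cap K\neq\{w\}$, there is a point $u\in K$ with $u\neq w$ in $\torus{n}$ and $u\in w+S_i^{\min}$. Choosing representatives $v,w,u\in\RR^n$, the two memberships $w\in v+S_i^{\min}$ and $u\in w+S_i^{\min}$ read $i\in\argmin_k(w_k-v_k)$ and $i\in\argmin_k(u_k-w_k)$; summing these two yields $i\in\argmin_k(u_k-v_k)$. Hence Remark~\ref{rmk:geoSegment} applies with this index and gives $[v,u]_\triangle=(v+S_i^{\min})\cap(u+S_i^{\max})$. Next I would check that $w\in[v,u]_\triangle$: the membership $w\in v+S_i^{\min}$ is the standing hypothesis, while, since $S_i^{\max}=-S_i^{\min}$, the relation $u-w\in S_i^{\min}$ is literally $w-u\in S_i^{\max}$, that is, $w\in u+S_i^{\max}$. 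So $w\in[v,u]_\triangle$. Strict $\triangle$-star-convexity of $K$ then gives $[v,u]_\triangle\setminus\{u\}\subseteq\operatorname{int}(K)$, and as $w\neq u$ we obtain $w\in\operatorname{int}(K)$.

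It remains to derive the contradiction with $w$ being exposed, which is the only mildly delicate point. If $w$ is $j$-exposed and interior, pick $\varepsilon>0$ small enough that $w-\varepsilon e_j\in K$; this point lies in $w+S_j^{\min}$ and differs from $w$ in $\torus{n}$, contradicting $(w+S_j^{\min})\cap K=\{w\}$. (Alternatively, one can note that a strictly $\triangle$-star-convex set is strictly $\min$-tropically convex and invoke Proposition~\ref{prop:fullDimStrictTropConv}, by which its interior is dense, so no interior point is exposed.) I do not foresee a genuine obstacle beyond bookkeeping with the coordinate sectors and representatives modulo $\RR\1$; the heart of the matter is simply that the two nested sector conditions---one giving membership in $v+S_i^{\min}$, the other obstructing $i$-exposedness---combine to place $w$ on the geodesic segment $[v,u]_\triangle$ strictly between its endpoints, after which strictness does the rest.
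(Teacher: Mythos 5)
Your argument is correct and is essentially the paper's own proof: failure of $i$-exposedness produces $u\in(w+S_i^{\min})\cap K$ with $u\neq w$, the two sector conditions combine (via Remark~\ref{rmk:geoSegment}) to give $w\in[v,u]_\triangle\setminus\{u\}$, and strict $\triangle$-star-convexity forces $w$ into the interior, contradicting exposedness; you merely spell out the interior-points-are-not-exposed step via the perturbation $w-\varepsilon e_j$, which the paper leaves implicit. (Your parenthetical alternative via Proposition~\ref{prop:fullDimStrictTropConv} is not quite right---that proposition says boundary points are exposed, not that interior points are unexposed---but your primary perturbation argument makes it unnecessary.)
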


\begin{proof}
    If $a\in v+S_i^{\min}$ and it is not $i$-exposed, then there exists $b\in(a+S_i^{\min})\cap K$ with $b\neq a$.
    In particular, $a\in[v,b]_\triangle\setminus\{b\}$.
    But the strict $\triangle$-star-convexity of $K$ implies that $a$ must be an interior point.
\end{proof}




\section{Tropically quasiconvex functions} \label{sec:tropQuasiconv}


A function $f:\RR^n\to\RR$ whose sub-level sets $L_{\leq\alpha}(f):=\{x:f(x)\leq\alpha\}$ are convex is called \emph{quasiconvex}.
This is a purely geometric definition, but some other sources define them as functions satisfying $f(\lambda x+({1-\lambda})y)\leq\max\{f(x),f(y)\}$ for every $x,y\in\RR^n$ and $\lambda\in[0,1]$.
The latter can be more convenient in checking quasiconvexity.
See \cite[Chapter~3]{genConvex} for more details.

We will be interested in specific \emph{tropically quasiconvex} functions.
Before we introduce them, we need some notation.
For a function $\gamma:\RR^n_{\geq 0}\to\RR$ we associate the function $\widehat\gamma:\torus{n}\to\RR$ defined by $\widehat\gamma(x)=\gamma(\widehat x)$.
We recall that $\widehat x=x-\left(\min_i x_i\right)\1$ are the canonical coordinates of $x$.

\begin{definition} \label{def:tropqconvfunc}
    We call a function $f:\torus{n}\to\RR$ \emph{$\triangle$-star-quasiconvex} with kernel~$v$ if $f(x)=\widehat\gamma(x-v)$ for some increasing function $\gamma:\RR^n_{\geq 0}\to\RR$.
    Moreover, if $\gamma$ is strictly increasing, we call $f$ \emph{strictly $\triangle$-star-quasiconvex}.
\end{definition}

We will give a geometric interpretation of $\triangle$-star-quasiconvex in Theorem~\ref{th:starQuasiConv}.
However, we prefer the definition above because it easier to check in practice.

\begin{example} \label{eg:tropicalLp}
    Considering $\gamma$ a monotonic norm \cite{monotoniNorms}, $f$ measures the distance to the kernel.
    If $v=\0$, then $f$ is a \emph{gauge} which are commonly used in convex analysis~\cite{Rockafellar:70} and location science~\cite{Nicke+Puerto:2005}.
    Gauges are sometimes dubbed ``asymmetric norms'' as they satisfy all the properties of a norm with the exception that $f(x)$ need not be equal to $f(-x)$.
    
    A famous class of monotonic norms are the $L^p$ norms.
    They give rise to $\triangle$-star-quasiconvex gauges whose expression is
    \[\gamma_p(x)=\left\{
    \begin{array}{ll}
          \sqrt[p]{\sum_{i\in[n]}\left(x_i-\min_{j\in[n]} x_j\right)^p} & \text{if }p\in[1,\infty)\\
          \max_{i\in[n]}x_i-\min_{j\in[n]}x_j & \text{if }p = \infty \\
    \end{array} 
    \right. .\]
    We call them \emph{tropical $L^p$ norms}.
    They appeared in the work of Luo~\cite{Luo:2018} under the name ``$B^p$-pseudonorms''.
    

    One can recognize the tropical $L^\infty$ norm as the tropical norm defined in \cite[\textsection 5]{Hampe:2015}.
    The relationship to the $L^\infty$ norm is stressed in \cite[Lemma~5.2.1]{Hampe:2015}.
    The tropical $L^1$ norm gives rise to the asymmetric tropical distance $d_\triangle$; this relationship is implicit in \cite[\textsection 6]{TropMedian}.
\end{example}

\begin{remark}
    The function $\widehat\gamma$ depends only on the values on $\partial\RR^n_{\geq 0}$, so we could have considered only $\partial\RR^n_{\geq 0}$ as the domain of $\gamma$.
    However, this does not increase the generality since every (strictly) increasing function defined on $\partial\RR^n_{\geq 0}$ can be extended to a (strictly) increasing function on $\RR^n_{\geq 0}$, according to the following lemma.
\end{remark}

\begin{lemma} \label{lem:extension}
    Every (strictly) increasing function $\gamma:\partial\RR^n_{\geq 0}\to\RR$ can be extended to a (strictly) increasing function $\tilde{\gamma}:\RR^n_{\geq 0}\to\RR$.
    Moreover, if $\gamma$ is continuous, then the extension can also be made continuous. 
\end{lemma}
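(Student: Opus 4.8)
The plan is to extend $\gamma$ from the boundary $\partial\RR^n_{\geq 0}$ to the full orthant $\RR^n_{\geq 0}$ by a radial/projection construction: given $x \in \RR^n_{\geq 0}$, push $x$ down to the boundary by subtracting $(\min_i x_i)\1$ — this lands in $\partial\RR^n_{\geq 0}$ — and then correct the value by a bounded additive term that records how far $x$ is from the boundary. Concretely, I would try $\tilde\gamma(x) = \gamma(\widehat x) + \varphi(\min_i x_i)$ for a suitable auxiliary function $\varphi:\RR_{\geq 0}\to\RR$ with $\varphi(0)=0$. Since $\widehat{x+\lambda\1} = \widehat x$, monotonicity of $\tilde\gamma$ cannot come from $\gamma(\widehat{\cdot})$ alone along the $\1$-direction, which is exactly what the $\varphi$ term is for. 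The issue is that an unbounded $\varphi$ would destroy the inequality $\tilde\gamma(x)\le\tilde\gamma(y)$ when $x\le y$ but $\min_i x_i$ is large and $\min_i y_i$ is only slightly larger while $\widehat y$ is much bigger than $\widehat x$ — so I need to control the interaction between the two terms.

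**First I would** handle the non-strict case, where the construction is easiest: for any $x \le y$ in $\RR^n_{\geq 0}$ one has both $\min_i x_i \le \min_i y_i$ and $\widehat x \le \widehat y$ is \emph{false} in general, so the naive split does not immediately work. Instead I would define $\tilde\gamma(x) = \gamma\bigl(x - (\min_i x_i)\1\bigr)$ when $\min_i x_i = 0$, i.e.\ keep $\gamma$ on the boundary, and for interior points set $\tilde\gamma(x) = \sup\{\gamma(z) : z \in \partial\RR^n_{\geq 0},\ z \le x\}$. This supremum is over a nonempty set (e.g.\ $z = x - (\min_i x_i)\1$ works, or even $z=0$ after translating so that $0\in\partial\RR^n_{\geq 0}$ is below $x$; in fact one must be careful that $\{z \le x\} \cap \partial\RR^n_{\geq 0}$ always contains points, which it does since $x \ge 0$ and $0 \in \partial\RR^n_{\geq 0}$), and monotonicity of $\tilde\gamma$ is immediate: $x \le y$ enlarges the constraint set, hence the sup can only increase. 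That this extends $\gamma$ — i.e.\ agrees with $\gamma$ on $\partial\RR^n_{\geq 0}$ — follows because for $x\in\partial\RR^n_{\geq 0}$, $x$ itself is the largest element of $\{z\le x\}\cap\partial\RR^n_{\geq 0}$ under $\le$... \textbf{except} this needs $\gamma$ increasing so that $\gamma(x) \ge \gamma(z)$ for all such $z\le x$, which is exactly our hypothesis. Good.

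**The strict case will be the main obstacle.** The sup-extension above is not strictly increasing: two interior points $x < y$ with $x,y$ lying over the same boundary point (so $x - (\min_i x_i)\1 = y - (\min_i y_i)\1$) and dominating the same boundary elements give $\tilde\gamma(x) = \tilde\gamma(y)$. To fix this I would add a strictly increasing bounded perturbation in the $\1$-direction that is small enough not to spoil monotonicity. Concretely, let $\eta:\RR_{\ge 0}\to(0,1)$ be any bounded strictly increasing function (say $\eta(t) = t/(1+t)$) and set $\tilde\gamma(x) = \gamma_{\mathrm{sup}}(x) + \varepsilon(x)\cdot\eta(\min_i x_i)$ where $\gamma_{\mathrm{sup}}$ is the sup-extension and $\varepsilon(x)>0$ is chosen uniformly small relative to the "gaps" of $\gamma$. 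The cleanest route: first use strictness of $\gamma$ on the boundary to show $\gamma_{\mathrm{sup}}$ is \emph{already} strictly increasing in every coordinate direction $e_k$, and only fails strictness along $\1$; then a single bounded strictly-increasing term in $\min_i x_i$, scaled down by a constant small enough that it cannot overturn a strict gain of $\gamma_{\mathrm{sup}}$, restores full strictness. Making "small enough" precise requires a compactness or normalization argument, since a priori the gaps of $\gamma$ can shrink to zero; one resolves this by working locally (cover $\RR^n_{\ge0}$ by regions on which the correction is controlled) or by building the perturbation adaptively as $\varepsilon(x) = \inf_{k}\inf_{t>0}\bigl(\gamma_{\mathrm{sup}}(x+te_k) - \gamma_{\mathrm{sup}}(x)\bigr)/2$-type quantities. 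Finally, for the continuity addendum, if $\gamma$ is continuous on $\partial\RR^n_{\geq 0}$ then $\gamma_{\mathrm{sup}}$ is continuous (the sup of a continuous function over the closed, continuously-varying constraint set $\{z\le x\}\cap\partial\RR^n_{\ge 0}$ depends continuously on $x$, by a standard maximum-theorem argument using properness), and choosing $\eta$ and $\varepsilon(\cdot)$ continuous keeps the strictly increasing extension continuous as well.
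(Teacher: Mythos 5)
Your non-strict construction is correct, and it is in fact the same object as in the paper: since every $z\in\partial\RR^n_{\geq 0}$ with $z\le x$ has some zero coordinate $i$ and hence $z\le(x_{-i},0_i)$, your sup-extension equals $\max_{i\in[n]}\gamma(x_{-i},0_i)$, which is exactly the first term of the paper's explicit formula $\tilde\gamma(x)=\max_{i\in[n]}\gamma(x_{-i},0_i)+\prod_{i\in[n]}x_i$. The genuine gap is in the strict case. Your plan rests on the claim that $\gamma_{\mathrm{sup}}$ is already strictly increasing in every coordinate direction and only fails strictness along $\1$; this is false. For $n=2$ take $\gamma(t,0)=t$ and $\gamma(0,t)=t/2$, a continuous strictly increasing function on $\partial\RR^2_{\geq 0}$ (points on different axes are incomparable unless one is the origin). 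Then $\gamma_{\mathrm{sup}}(x)=\max(x_1,x_2/2)$, so $\gamma_{\mathrm{sup}}(1,1)=\gamma_{\mathrm{sup}}(1,3/2)=1$ although $(1,1)\le(1,3/2)$ with strict increase in the $e_2$ direction. Since $\min_i x_i$ is the same at these two points, no correction of the form $\varepsilon(x)\cdot\eta(\min_i x_i)$ can separate their values; moreover your adaptive choice $\varepsilon(x)=\tfrac12\inf_k\inf_{t>0}\bigl(\gamma_{\mathrm{sup}}(x+te_k)-\gamma_{\mathrm{sup}}(x)\bigr)$ equals $0$ at both points, and an $x$-dependent $\varepsilon$ also threatens the monotonicity of the sum. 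So the strict case is not repaired by any perturbation that only sees the $\1$/$\min$ direction, and this is the step that would fail.

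What is needed is an additive term that vanishes exactly on $\partial\RR^n_{\geq 0}$ and strictly increases whenever $x\le y$, $x\neq y$ and $y\in\RR^n_{>0}$; the paper's choice is $\prod_{i\in[n]}x_i$. With it the case analysis closes: if $y\in\partial\RR^n_{\geq 0}$ then $x\le y$ forces $x\in\partial\RR^n_{\geq 0}$, so strictness comes from $\gamma$ itself because the extension restricts to $\gamma$ there; if $y\in\RR^n_{>0}$ then $\prod_j x_j<\prod_j y_j$ while the max term is still weakly increasing. Continuity of this extension is immediate, so your Berge-type maximum-theorem argument, while correct for $\gamma_{\mathrm{sup}}$, becomes unnecessary once the closed form $\max_{i\in[n]}\gamma(x_{-i},0_i)$ is observed.
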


\begin{proof}
    Consider $\tilde{\gamma}(x)=\max_{i\in[n]}\gamma(x_{-i},0_i)+\prod_{i\in[n]}x_i$.
    Clearly, this is continuous if $\gamma$ is, as being a composition of continuous functions.
    Moreover, $\tilde\gamma(x)=\gamma(x)$ for every $\vec x\in\partial\RR^n_{\geq 0}$, due to monotonicity of $\gamma$ and the fact that $x_1x_2\dots x_n=0$ for $x\in\partial\RR^n_{\geq 0}$.

    If $ x\leq y$, then $x_{-i}\leq y_{-i}$ for all $i\in[n]$, where $x_{-i}$ is obtained from $x$ by removing the $i$th entry.
    Therefore, $\gamma(x_{-i},0_i)\leq\gamma(y_{-i},0_i)$ for every $i\in[n]$, which implies $\tilde\gamma(x)\leq\tilde\gamma(y)$ after using $\prod_j x_j\leq\prod_j y_j$.
    In other words, $\tilde\gamma$ is increasing.

    Moreover, if $\gamma$ is strictly increasing and $x\neq y$ we have two cases.
    On the one hand, if $y\in\partial\RR^n_{\geq 0}$, then $x\in\partial\RR^n_{\geq 0}$ so $\tilde\gamma(x)=\gamma(x)<\gamma(y)=\tilde\gamma(y)$.
    
    On the other hand, if $y\in\RR^n_{>0}$, then $\prod_j x_j<\prod_j y_j$.
    Using the last inequality with $\max_{i\in[n]}\gamma(x_{-i},0_i)\leq\max_{i\in[n]}\gamma(y_{-i},0_i)$, we obtain $\tilde\gamma(x)<\tilde\gamma(y)$.
    Accordingly, $\tilde\gamma$ is strictly increasing if $\gamma$ is strictly increasing.
\end{proof}

The following result explains why the functions from Definition~\ref{def:tropqconvfunc} deserve the name ``$\triangle$-star-quasiconvex''.

\begin{theorem} \label{th:starQuasiConv}
    Let $f:\torus{n}\to\RR$ be a continuous function.
    Then $f$ is (strictly) $\triangle$-star-quasiconvex if and only if all of its non-empty sub-level sets are (strictly) $\triangle$-star convex with the same kernel.
\end{theorem}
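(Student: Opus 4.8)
The plan is to prove the two directions separately, treating the ``strictly'' qualifier in parallel with the plain statement. For the forward direction, suppose $f$ is $\triangle$-star-quasiconvex with kernel $v$, so $f(x)=\widehat\gamma(x-v)$ for an increasing $\gamma$. Fix $\alpha$ with $L_{\leq\alpha}(f)\neq\emptyset$; I first check that $v\in L_{\leq\alpha}(f)$, since $\widehat{(v-v)}=\widehat{\0}=\0$ and $\gamma(\0)\leq\gamma(\widehat w)$ for any $w$ in the sub-level set (so $\alpha\geq f(v)$). Then I take $w\in L_{\leq\alpha}(f)$ and must show $[v,w]_\triangle\subseteq L_{\leq\alpha}(f)$. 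The key computation is that, after translating so that $v=\0$ and choosing the representative of $w$ with $\min_j w_j=0$, Remark~\ref{rmk:geoSegment} identifies $[v,w]_\triangle$ with the image of the order segment $[\0,w]_\leq$; hence every point of $[v,w]_\triangle$ has a representative $x$ with $\0\leq x\leq w$, so $\widehat x \leq x \leq w = \widehat w$ entrywise (here I use that $\widehat x \leq x$ since $\min_j x_j\geq 0$), and monotonicity of $\gamma$ gives $f(x)=\widehat\gamma(x)=\gamma(\widehat x)\leq\gamma(\widehat w)=f(w)\leq\alpha$. This shows each non-empty sub-level set is $\triangle$-star-convex with kernel $v$. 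For the strict case, I additionally need that $[v,w]_\triangle\setminus\{w\}$ lands in the \emph{interior} of $L_{\leq\alpha}(f)$; strict monotonicity of $\gamma$ and continuity of $f$ should give a strict inequality $f(x)<\alpha$ on that set, and continuity then places such $x$ in the interior—so I must be slightly careful to argue that $\widehat x\neq \widehat w$ when $x\in[v,w]_\triangle\setminus\{w\}$, i.e. that the canonical coordinates genuinely drop, which follows from the explicit form $\widehat x\leq w$ together with $x\not\equiv w\bmod\RR\1$.

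For the converse, suppose all non-empty sub-level sets of $f$ are (strictly) $\triangle$-star-convex with a common kernel $v$. Translating, assume $v=\0$; I want to produce an increasing $\gamma:\RR^n_{\geq0}\to\RR$ with $f(x)=\widehat\gamma(x)$. Since $\widehat\gamma$ only depends on the values of $\gamma$ on $\partial\RR^n_{\geq0}$, and by Lemma~\ref{lem:extension} any (strictly) increasing function on $\partial\RR^n_{\geq0}$ extends to one on all of $\RR^n_{\geq0}$, it suffices to define $\gamma$ on $\partial\RR^n_{\geq0}$ and to check it is (strictly) increasing there; the natural choice is $\gamma(y):=f(\bar y)$ where $\bar y$ is the class of $y$ in $\torus n$ (this is well-defined on $\partial\RR^n_{\geq0}$ because those are exactly the canonical coordinates, one per class). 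Then by construction $\widehat\gamma(x)=\gamma(\widehat x)=f(\widehat x)=f(x)$, so the only content is monotonicity of $\gamma$: given $y\leq y'$ in $\partial\RR^n_{\geq0}$, I must show $f(\bar y)\leq f(\bar{y'})$.

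The crux is this monotonicity claim, and here is where the $\triangle$-star-convexity of sub-level sets enters. Set $\alpha:=f(\bar{y'})$, so $\bar{y'}\in L_{\leq\alpha}(f)$, which is non-empty and $\triangle$-star-convex with kernel $\0=v$; hence $[\,\0,\bar{y'}\,]_\triangle\subseteq L_{\leq\alpha}(f)$. It remains to observe that $\bar y\in[\,\0,\bar{y'}\,]_\triangle$: choosing the representative $y'$ with $\min_j y'_j=0$, the geodesic segment $[\0,\bar{y'}]_\triangle$ is the image of the box $[\0,y']_\leq$ by Remark~\ref{rmk:geoSegment}, and $\0\leq y\leq y'$ puts $y$—hence $\bar y$—in it. Therefore $f(\bar y)\leq\alpha=f(\bar{y'})$, i.e. $\gamma$ is increasing. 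For the strict case, if moreover $y\neq y'$ then $\bar y\neq\bar{y'}$ (distinct points of $\partial\RR^n_{\geq0}$ lie in distinct classes), so $\bar y\in[\,\0,\bar{y'}\,]_\triangle\setminus\{\bar{y'}\}$, and strict $\triangle$-star-convexity of $L_{\leq\alpha}(f)$ would place $\bar y$ in its interior; combined with continuity of $f$ this yields $f(\bar y)<\alpha$, so $\gamma$ is strictly increasing. The main obstacle I anticipate is bookkeeping the passage between $\torus n$ and representatives—specifically making sure the canonical-coordinate reduction $x\mapsto\widehat x$ interacts correctly with the order $\leq$ and with the identification of $[v,w]_\triangle$ as a box polytrope—rather than any deep difficulty; the continuity hypothesis does real work only in the two ``interior'' arguments of the strict case.
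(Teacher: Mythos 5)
Your proposal is correct and follows essentially the same route as the paper's proof: the forward direction via the identification of $[v,w]_\triangle$ with the image of the order box $[\0,\widehat w]_\leq$ from Remark~\ref{rmk:geoSegment} plus monotonicity of $\gamma$ (with strict monotonicity and continuity placing $[v,w]_\triangle\setminus\{w\}$ in the interior), and the converse by defining $\gamma(y)=f(\bar y)$ on $\partial\RR^n_{\geq 0}$, invoking Lemma~\ref{lem:extension}, and reading off (strict) monotonicity from the (strict) $\triangle$-star-convexity of $L_{\leq f(\bar y')}(f)$. Even the one delicate point in the strict converse---passing from ``$\bar y$ lies in the interior of $L_{\leq\alpha}(f)$'' to ``$f(\bar y)<\alpha$'' via continuity---is treated at the same level of detail as in the paper, so there is nothing to add.
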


\begin{proof}
    After an eventual translation, we can assume that the kernel is $\0$.

    Firstly, assume $f$ is $\triangle$-star-quasiconvex and let $\alpha\in\RR^n$ arbitrary such that $L_{\leq\alpha}(f)$ is non-empty.
    Let $\gamma:\RR^n\to\RR$ increasing such that $f(x)=\widehat\gamma(x)$.
    
    Let $w\in L_{\leq\alpha}(f)$ and choose $i\in[n]$ such that $w\in S_i^{\min}$.
    Since $\gamma$ is increasing, the points $x\in\RR^n$ satisfying $\0\leq x\leq\widehat w$ belong to $L_{\leq\alpha}(\gamma)$.
    This set projects onto $[\0,w]_\triangle$ showing that $[\0,w]_\triangle\subseteq L_{\leq\alpha}(f)$.
    Since $w$ was selected arbitrarily, $L_{\leq\alpha}(f)$ must be $\triangle$-star convex with kernel $\0$.

    If $f$ is strictly $\triangle$-star-quasiconvex, then the points satisfying $\0\leq x\leq \widehat w$ different from $\widehat w$ actually belong to $L_{<\alpha}(f)$.
    Due to the continuity of $f$, this coincides with the interior of $L_{\leq\alpha}(f)$.
    This shows that $L_{\leq\alpha}(f)$ is strictly $\triangle$-star-convex.

    Conversely, assume that $L_{\leq\alpha}(f)$ is $\triangle$-star-convex with kernel $\0$ for every $\alpha\geq f(\0)$.
    Take $\gamma:\partial\RR^n_{\geq 0}\to\RR$ defined as $\gamma(x)=f(\widehat x)$ for $x\in\partial\RR^n_{\geq 0}$.
    Using Lemma~\ref{lem:extension} it is enough to show that $\gamma$ is increasing.

    Let $x$ and $y$ arbitrary points of $\partial\RR^n_{\geq 0}$ such that $x\leq y$.
    The order segment $[\0,y]_{\leq}$ projects onto $[\0,y]_{\triangle}$ which belongs to $L_{\leq f(y)}(f)$.
    Due to the $\triangle$-star-convexity of sub-level sets, we obtain $\gamma(x)=f(x)\leq f(y)=\gamma(y)$.

    If we have strict $\triangle$-star-convexity, then $[0,y]_\triangle\setminus\{y\}$ is contained in the interior of $L_{\leq f(y)}(f)$ which coincides to $L_{<f(y)}(f)$.
    Hence, we obtain $\gamma(x)<\gamma(y)$ for this case. 
\end{proof}

\begin{remark}
    The continuity of $f$ is relevant only for strictly $\triangle$-star-quasiconvex functions.
    Without continuity, only the strict $\triangle$-star-convexity of the sub-level sets is not sufficient for $f$ to be strictly $\triangle$-star-quasiconvex.
    This is similar to the case of ordinary quasiconvex functions; cf. \cite[Proposition~3.28]{genConvex} and \cite[Example~3.3]{genConvex}.
\end{remark}

We will see that convexity, in the ordinary sense, will also be helpful for our applications.
We give a simple criterion for checking when a $\triangle$-star-quasiconvex function is convex.

\begin{lemma} \label{lem:convexGammaHat}
    If $\gamma$ is increasing and (strictly) convex, then $\widehat\gamma$ is (strictly) convex.
\end{lemma}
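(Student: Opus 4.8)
The plan is to unwind the definition of $\widehat\gamma$ as a composition. Recall that $\widehat\gamma(x) = \gamma(\widehat x)$ where $\widehat x = x - (\min_i x_i)\1$. The key observation is that the map $\phi:\torus{n}\to\RR^n_{\geq 0}$ sending $x$ to its canonical representative $\widehat x$ is linear on $\torus{n}$: indeed, on representatives it is $x\mapsto x - (\min_i x_i)\1$, and although $\min_i x_i$ is only piecewise linear, the map is well-defined and linear as a map on the quotient $\torus{n}=\RR^n/\RR\1$ — it is just a choice of section of the quotient, namely the composite $\RR^n\to\RR^n/\RR\1\to\RR^n$ where the second arrow picks the representative whose minimum entry is $0$. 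Concretely, if we identify $\torus{n}$ with the hyperplane $\{y\in\RR^n : \min_i y_i = 0\}$, then $\phi$ is the identity on that model, hence affine-linear; under any other linear identification of $\torus{n}$ with $\RR^{n-1}$ it remains affine-linear.

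First I would make this precise: fix the linear isomorphism $\torus{n}\cong\{y:\min_i y_i=0\}\subseteq\RR^n_{\geq 0}$, under which $\widehat\gamma$ becomes simply the restriction of $\gamma$ to this hyperplane-with-boundary, composed with the inclusion. Then $\widehat\gamma = \gamma\circ\iota$ where $\iota$ is an affine-linear (in fact linear) embedding. Since the composition of a convex function with an affine map is convex, and the composition of a strictly convex function with an \emph{injective} affine map is strictly convex, we are done — provided we only need convexity of $\gamma$ on the image of $\iota$, which is exactly the boundary $\partial\RR^n_{\geq 0}$ hit by the canonical coordinates, and this is contained in the domain $\RR^n_{\geq 0}$ of $\gamma$ where $\gamma$ is assumed (strictly) convex. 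Note that monotonicity of $\gamma$ is not actually needed for this argument; it is listed in the hypothesis presumably because $\widehat\gamma$ only "sees" $\gamma$ on the boundary and monotonicity is what makes $\widehat\gamma$ a $\triangle$-star-quasiconvex function in the first place, but the convexity transfer is purely formal.

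The only mildly delicate point — and the one I expect to need a sentence of care — is checking that the identification of $\torus{n}$ with an affine slice really does turn $x\mapsto\widehat x$ into an affine map, since a priori $\widehat x$ involves the nonlinear operation $\min_i x_i$. The resolution is that we never apply $\min$ to points of $\torus{n}$; rather, on the quotient there is a canonical bijection to the slice $\{\min_i y_i = 0\}$, and that bijection is linear (it is the inverse of the linear quotient map restricted to the slice). So $\widehat{(\cdot)}:\torus{n}\to\RR^n$ factors as (linear iso onto the slice) followed by (linear inclusion of the slice into $\RR^n$), making it linear. After that observation the lemma is immediate from the standard fact that $g\circ A$ is (strictly) convex when $g$ is (strictly) convex and $A$ is affine (and injective, for the strict case).
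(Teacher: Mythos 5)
There is a genuine gap: the central claim that the canonical-coordinates map $x\mapsto\widehat x$ is (affine-)linear on $\torus{n}$ is false. Its image $\{y\in\RR^n:\min_i y_i=0\}=\partial\RR^n_{\geq 0}$ is the boundary of the nonnegative orthant --- a union of $n$ facets, not a hyperplane or affine slice --- so there is no ``linear isomorphism of $\torus{n}$ onto the slice'': the inverse of the quotient map restricted to a nonconvex set is not linear, and the section $x\mapsto\widehat x$ is only piecewise linear on the quotient. (For $n=2$, $\widehat{(0,1)}+\widehat{(1,0)}=(1,1)$, whereas the class of $(0,1)+(1,0)$ has canonical representative $(0,0)$.) With the affine-composition reduction gone, your side remark that monotonicity of $\gamma$ is not needed also fails, and in fact it is the crux: take $\gamma(x)=-x_1$, which is convex but decreasing; identifying $\torus{2}$ with $\RR$ via $t=x_2-x_1$, one computes $\widehat\gamma=\min(t,0)$, which is concave and not convex. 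So the convexity transfer is not ``purely formal''.

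The argument that does work is pointwise: since $z\mapsto\min_i z_i$ is concave, one has the componentwise inequality $\lambda x+(1-\lambda)y-\min_j\bigl(\lambda x_j+(1-\lambda)y_j\bigr)\1\;\leq\;\lambda\,\widehat x+(1-\lambda)\,\widehat y$. Because $\gamma$ is \emph{increasing}, applying it preserves this inequality, and then convexity of $\gamma$ gives $\widehat\gamma\bigl(\lambda x+(1-\lambda)y\bigr)\leq\gamma\bigl(\lambda\widehat x+(1-\lambda)\widehat y\bigr)\leq\lambda\widehat\gamma(x)+(1-\lambda)\widehat\gamma(y)$; when $\gamma$ is strictly convex and $x\neq y$ modulo $\RR\1$, the second inequality is strict, giving strict convexity of $\widehat\gamma$. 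The monotonicity hypothesis is exactly what lets you pass from the componentwise comparison of representatives to a comparison of $\gamma$-values; this is the step your proposal tried to bypass, and it cannot be bypassed.
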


\begin{proof}
    Let $x,y\in\RR^n$ and $\lambda\in[0,1]$.
    
    We have $\min_j\left(\lambda x_i+(1-\lambda) y_i\right)\geq\lambda\min_i x_i+(1-\lambda)\min_i y_i$ as $\lambda,1-\lambda\geq 0$.
    Hence, $\lambda x+(1-\lambda) y-\min_j\left(\lambda x_i+(1-\lambda) y_i\right)\1\leq\lambda( x-\min_i x_i\1)+(1-\lambda)( y-\min_i y_i\1)$.
    Since $\gamma$ is convex and increasing, we obtain
    \begin{equation} \label{eq:ineqGammaHat}
    \begin{split}
        \widehat\gamma\left(\lambda x+(1-\lambda) y\right) & \leq\gamma\left(\lambda( x-\min_i x_i\1)+(1-\lambda)(y-\min_i y_i\1)\right) \\
        & \leq\lambda\,\gamma(x-\min_i x_i\1)+(1-\lambda)\,\gamma(y-\min_i y_i\1) \\
        &=\lambda\widehat\gamma(x)+(1-\lambda)\widehat\gamma(y).  
    \end{split}
    \end{equation}

    If $\gamma$ is strictly convex and $x\neq y$ modulo $\RR\1$, then the second inequality from~\eqref{eq:ineqGammaHat} is strict, so $\widehat\gamma\left(\lambda x+(1-\lambda) y\right)<\lambda\widehat\gamma(x)+(1-\lambda)\widehat\gamma(y)$.
    Thus, $\widehat\gamma$ is strictly convex if $\gamma$ is strictly convex.
\end{proof}

\section{Tropically convex location problems} \label{sec:locProb}

We will consider some input points $v_1,\dots,v_m$ in $\torus{n}$.
We measure the distance (or dissimilarity) from $x\in\torus{n}$ to a point $v_i$ using a $\triangle$-star-quasiconvex function $f_i$ having kernel $v_i$.
We consider increasing functions $\gamma_i:\RR^n\to\RR$ such that $f_i(x)=\widehat\gamma_i(x-v_i)$.
Without loss of generality, we assume $\gamma_i(\0)=0$, so that all dissimilarities are non-negative.

The purpose of location problems is to find a point as close (or similar) as possible to the input points, depending on some criterion; usually, the optimal location is a minimum of an objective function $h:\torus{n}\to\RR$.
The function $h$ is constructed using an increasing function $g:\RR^m_{\geq 0}\to\RR$, which aggregates the distances to the input points.
Formally, we define $h(x)=g\left(f_1(x),\dots,f_m(x)\right)$. 

Since $f_i$ measures the distance or dissimilarity from $x$ to $v_i$ and $g$ is increasing, the minima of $h$ record a global closeness to the input points.
In most studied location problems, we would have a distance $d$ on $\torus{n}$ and set $f_i(x)=d(x,v_i)$.
Common choices of $g$ are $g(x)=x_1+\dots+x_m$, for the median or Fermat--Weber problem, $g(x)=\max_{i\in[m]}x_i$ for the center problem~\cite{LocationScience:2019}, or $g(x)=x_1^2+\dots+x_m^2$, for defining the Fr\'{e}chet mean~\cite{frechet:1948}.
Nevertheless, we will allow $g$ to be an arbitrary increasing function.
We will assume that $h$ has a minimum, which happens, e.g., when $h$ is lower semi-continuous.

\begin{theorem} \label{th:locationTropConv}
    Let $h$ be as above.
    Then there is a minimum of $h$ belonging to $\tconv^{\max}(v_1,\dots, v_m)$.
    Moreover, if $g$ is strictly increasing and at least one of $f_1,\dots,f_m$ is strictly $\triangle$-star-quasiconvex, then all the minima of $h$ are contained in $\tconv^{\max}(v_1,\dots,v_m)$.
\end{theorem}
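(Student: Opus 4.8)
\textit{Proof proposal.}

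The plan is to show that every point of $\torus{n}$ can be moved into $\tconv^{\max}(v_1,\dots,v_m)$ without increasing any of the dissimilarities $f_i$ (and, under the extra hypotheses, while strictly decreasing at least one of them when the starting point lies outside the hull). Concretely, fix representatives in $\RR^n$ of $x\in\torus{n}$ and of the sites, put $\lambda_k:=\min_{j\in[n]}\bigl(x_j-(v_k)_j\bigr)$, and define
\[
\pi(x):=\bigvee_{k\in[m]}\bigl(v_k+\lambda_k\1\bigr).
\]
This is a $\max$-tropical linear combination of the sites, so its image lies in $\tconv^{\max}(v_1,\dots,v_m)$; replacing $x$ by $x+c\1$ replaces $\pi(x)$ by $\pi(x)+c\1$, and replacing $v_k$ by $v_k+c_k\1$ leaves $\pi(x)$ unchanged, so $\pi$ descends to a well-defined map $\torus{n}\to\torus{n}$.

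First I would record two properties of $\pi(x)$: \textbf{(i)} $\pi(x)\leq x$ coordinatewise in $\RR^n$, which is immediate since $v_k+\lambda_k\1\leq x$ for every $k$ by the choice of $\lambda_k$; and \textbf{(ii)} $\min_{j\in[n]}\bigl(\pi(x)_j-(v_i)_j\bigr)=\lambda_i$ for every $i\in[m]$ --- here ``$\geq$'' holds because the $k=i$ term already forces $\pi(x)_j-(v_i)_j\geq\lambda_i$ for all $j$, and ``$\leq$'' follows from (i) together with $\lambda_i=\min_j\bigl(x_j-(v_i)_j\bigr)$. Combining (i) and (ii),
\[
\widehat{\pi(x)-v_i}=\bigl(\pi(x)-v_i\bigr)-\lambda_i\1\ \leq\ (x-v_i)-\lambda_i\1=\widehat{x-v_i}
\]
in $\RR^n_{\geq 0}$, so monotonicity of $\gamma_i$ gives $f_i(\pi(x))=\gamma_i\bigl(\widehat{\pi(x)-v_i}\bigr)\leq\gamma_i\bigl(\widehat{x-v_i}\bigr)=f_i(x)$ for every $i$. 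Since $g$ is increasing, $h(\pi(x))\leq h(x)$; applying this to a minimum of $h$ (which exists by hypothesis) yields a minimum inside $\tconv^{\max}(v_1,\dots,v_m)$, which is the first assertion.

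For the ``moreover'' part, suppose $g$ is strictly increasing and $f_{i_0}$ is strictly $\triangle$-star-quasiconvex, so that $\gamma_{i_0}$ is strictly increasing, and let $x$ be a minimum of $h$ with $x\notin\tconv^{\max}(v_1,\dots,v_m)$. Then $\pi(x)\neq x$ in $\torus{n}$, i.e.\ $\pi(x)-v_{i_0}$ and $x-v_{i_0}$ do not differ by a multiple of $\1$, hence $\widehat{\pi(x)-v_{i_0}}\neq\widehat{x-v_{i_0}}$; combined with $\widehat{\pi(x)-v_{i_0}}\leq\widehat{x-v_{i_0}}$ from above, strict monotonicity of $\gamma_{i_0}$ forces $f_{i_0}(\pi(x))<f_{i_0}(x)$. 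Since also $f_j(\pi(x))\leq f_j(x)$ for all $j$, the vector $\bigl(f_1(\pi(x)),\dots,f_m(\pi(x))\bigr)$ is $\leq$ but distinct from $\bigl(f_1(x),\dots,f_m(x)\bigr)$, so strict monotonicity of $g$ gives $h(\pi(x))<h(x)$, contradicting minimality of $x$. Hence every minimum lies in $\tconv^{\max}(v_1,\dots,v_m)$.

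I expect the only substantial step to be property (ii): the content is that the $\max$-tropical projection does not change the ``$\min$-gap'' $\min_j\bigl(x_j-(v_i)_j\bigr)$ relative to any site, which is exactly what lets the coordinatewise inequality $\pi(x)\leq x$ survive the renormalization by $-\lambda_i\1$ hidden in the hat operation and thus translate into $f_i(\pi(x))\leq f_i(x)$. Everything else is bookkeeping: well-definedness on $\torus{n}$, the fact that $f_i\geq 0$ (from $\gamma_i(\0)=0$ and monotonicity) so that the arguments of $g$ lie in $\RR^m_{\geq 0}$, and the observation that neither a triangle inequality nor any property of the $\gamma_i$ beyond monotonicity enters the argument.
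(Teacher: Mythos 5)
Your proof is correct, and it takes a genuinely different route from the paper. The paper argues by local descent: for a minimum $x\notin\tconv^{\max}(v_1,\dots,v_m)$ it uses \cite[Proposition~5.37]{ETC} to find a coordinate $k$ with $k\notin\argmin_j(x_j-v_{ij})$ for all $i$, moves to $x-\delta e_k$ without increasing any $f_i$ (strictly decreasing some $f_\ell$ under the strict hypotheses), and then iterates, certifying termination by showing that the set $D(x)$ of available descent directions strictly shrinks until it is empty, which is again equivalent to membership in the hull. You instead construct in one shot the tropical projection $\pi(x)=\bigvee_k\bigl(v_k+\lambda_k\1\bigr)$ --- which is exactly the formula of Lemma~\ref{lem:tropProj} specialised to the polytope generated by the sites --- and your key observation (ii), that $\pi$ preserves each gap $\min_j\bigl(x_j-(v_i)_j\bigr)$, makes the canonical coordinates satisfy $\widehat{\pi(x)-v_i}\leq\widehat{x-v_i}$, so all dissimilarities drop weakly, and strictly for a strictly $\triangle$-star-quasiconvex $f_{i_0}$ as soon as $x$ lies outside the hull (since then $\pi(x)\neq x$, $\pi(x)$ being a $\max$-tropical combination of the sites). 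What each approach buys: yours avoids the iteration and the $D(x)\subsetneq$ bookkeeping entirely, produces the in-hull minimum explicitly as the projection of any minimum, and is in the same spirit as the proof of Theorem~\ref{th:locationSetSites}; the paper's coordinatewise descent is more local and makes transparent the refinement used in \textsection\ref{subsec:regular} (strict increase of $g$ in a single entry suffices), though your argument supports that refinement as well. All steps you flag as bookkeeping (well-definedness on $\torus{n}$, existence of a minimum, nonnegativity of the $f_i$) are indeed routine and handled correctly.
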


\begin{proof}
    Consider $x\notin\tconv^{\max}(v_1,\dots,v_m)$ which is a minimum of $h$.
    Thus there exists $k\in[n]$ such that $k\notin\argmin_j(x_j-v_{ij})$ for all $i\in[m]$.
    Set $\delta_i:=x_k-v_{ik}-\min_j(x_j-v_{ij})$ for all $i$, and $\delta=\min_i\delta_i$, which is strictly positive by the consideration of $k$.
    
    Note that $f_i(x-\delta e_k)=\gamma_i\left(x-v_i-\delta e_k-\min_j({x_j-v_{ij})\1}\right)\leq\gamma_i(x-v_i-{\min_j(x_j-v_{ij})\1})=f_i(x)$ for all $i\in[n]$.
    Hence $h(x-\delta e_k)\leq h(x)$.

    Note that the inequality above is strict if $g$ and some $\gamma_\ell$ are strictly increasing.
    Indeed, in that case, we must have $f_\ell(x-\delta e_k)<f_\ell(x)$, so we use the strict increase of $g$ in the $\ell$th entry.
    That would contradict the optimality of~$x$, so the second statement of the theorem holds.

    For the first statement, we can only infer that $x-\delta e_k$ is also a minimum of $h$.
    Hence, we can find an optimum of $h$ in $\tconv^{\max}(v_1,\dots,v_m)$ by moving $x$ in directions $-e_k$ for indices $k$ as above.

    To be more precise, we collect in $D(x)$ the possible elementary descent directions from $x$; formally $D(x):=\bigcap_{i\in[m]}\left([n]\setminus\argmin_{j\in[n]}(x_j-v_{ij})\right)$.
    Notice that $k\in D(x)$, but $k\notin D(x-\delta e_k)$.
    Moreover, $D(x-\delta e_k)\subsetneq D(x)$, as the $\argmin$ functions only increase by our move in a descent direction.
    Thus, replacing $x$ by $x-\delta e_k$, we find a minimum with smaller $D(x)$.
    We can repeat the procedure to construct a minimum $x^\star$ of $h$ with $D(x^\star)=\emptyset$.
    The last condition is equivalent to $x^\star\in\tconv^{\max}(v_1,\dots,v_m)$ due to \cite[Proposition~5.37]{ETC}.
\end{proof}

\begin{remark}
    The regions of $f_i$ where it looks like a monotonic function are induced by the $\min$-tropical hyperplane based at $v_i$.
    Those hyperplanes defined the $\max$-tropical polytope generated by the input points, explaining why we look at the $\max$-tropical convex hull, instead of the $\min$ analogue.
\end{remark}

The following lemma presents cases when there is a unique optimum location.
We recall that a gauge $\gamma$ is called \emph{strictly convex} if $\gamma(\lambda x+(1-\lambda)y)<1$ for every $\lambda\in(0,1)$ and $x,y\in\torus{n}$ with $\gamma(x)=\gamma(y)=1$, although they are not strictly convex functions.


\begin{lemma} \label{lem:unique-sol}
Assume that $g,f_1,\dots,f_m$ are convex, $g$ is strictly increasing, and at least one of the following conditions holds:
\begin{itemize}
    \item[a)] at least one $f_i$ is strictly convex; or
    \item[b)] all $f_i$ are strictly convex gauges and the points $v_1,\dots, v_m$ are not collinear.
\end{itemize}
Then $h$ is strictly convex.
In particular, it has a unique minimum.
\end{lemma}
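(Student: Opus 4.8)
The plan is to show that $h$ is strictly convex, since a strictly convex function on a convex set has at most one minimum, and we have assumed a minimum exists. Recall $h(x)=g(f_1(x),\dots,f_m(x))$ with $g$ convex and strictly increasing and each $f_i$ convex. First I would establish the general composition fact: if $g:\RR^m_{\geq 0}\to\RR$ is convex and increasing and $f_1,\dots,f_m$ are convex, then $x\mapsto g(f_1(x),\dots,f_m(x))$ is convex. This is standard: for $x,y\in\torus{n}$ and $\lambda\in[0,1]$, monotonicity of $g$ applied to the vector inequality $f_i(\lambda x+(1-\lambda)y)\leq\lambda f_i(x)+(1-\lambda)f_i(y)$ gives $h(\lambda x+(1-\lambda)y)\leq g(\lambda(f_1(x),\dots)+(1-\lambda)(f_1(y),\dots))\leq\lambda h(x)+(1-\lambda)h(y)$ by convexity of $g$. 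So $h$ is convex; it remains to locate a strict inequality somewhere in this chain under hypothesis (a) or (b).

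Under hypothesis (a), suppose $f_\ell$ is strictly convex and $x\neq y$ modulo $\RR\1$. Then $f_\ell(\lambda x+(1-\lambda)y)<\lambda f_\ell(x)+(1-\lambda)f_\ell(y)$ for $\lambda\in(0,1)$, so the argument vector fed into $g$ at the first step is strictly dominated in the $\ell$th coordinate. Since $g$ is strictly increasing, the first application of monotonicity of $g$ is strict, and we conclude $h(\lambda x+(1-\lambda)y)<\lambda h(x)+(1-\lambda)h(y)$. Hence $h$ is strictly convex. (Here we use that $g$ being strictly increasing means strict in each coordinate, so dropping one coordinate strictly decreases the value.)

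Under hypothesis (b), each $f_i$ is a strictly convex gauge, so it is convex but generally not strictly convex as a function; strict convexity of a gauge only forces strict inequality when $x$ and $y$ are not positive multiples of one another, i.e.\ when $0,x,y$ are not collinear. The key point is that if $0,x,y$ are collinear — say $x,y$ lie on a common ray from the origin — then since the $v_i$ are not all collinear, the translated points $x-v_i$ and $y-v_i$ fail to lie on a common ray from the origin for at least one index $\ell$; more carefully, I would argue that if $\lambda x+(1-\lambda)y$, $x$, $y$ were such that $f_\ell(\lambda x+(1-\lambda)y)=\lambda f_\ell(x)+(1-\lambda)f_\ell(y)$ for every $\ell$, then the strict convexity of each gauge $\gamma_\ell$ forces $x-v_\ell$ and $y-v_\ell$ to be parallel (nonnegatively proportional) for every $\ell$, which pins $x-y$ to be a common direction and forces all $v_\ell$ onto a single line through $x$ (modulo $\RR\1$), contradicting non-collinearity. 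Thus for $x\neq y$ there is always some $\ell$ with a strict inequality at the $f_\ell$ step, and strict monotonicity of $g$ propagates it to $h$, exactly as in case (a). Either way, $h$ is strictly convex, hence its minimum — which exists by assumption — is unique.

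The main obstacle is the bookkeeping in case (b): translating the "strictly convex gauge" condition (strictness only off-rays) into a precise statement about when equality $f_\ell(\lambda x+(1-\lambda)y)=\lambda f_\ell(x)+(1-\lambda)f_\ell(y)$ can hold, and then showing that simultaneous equality for all $\ell$ forces the $v_i$ to be collinear (in $\torus{n}$). One must be careful about the quotient by $\RR\1$ and about degenerate cases where $x-v_\ell$ or $y-v_\ell$ is the zero class. The composition lemma and case (a) are routine given Lemma~\ref{lem:convexGammaHat} and the definitions.
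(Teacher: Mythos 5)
Your proposal is correct and follows essentially the same route as the paper: convexity of the composition plus strict monotonicity of $g$ to propagate a single strict inequality, with case a) identical and case b) resting on the same fact that a strictly convex gauge is strict unless its two arguments are nonnegatively proportional. The only difference is cosmetic: the paper argues case b) directly by picking a $v_i$ off the line through $x$ and $y$ so that $x-v_i$ and $y-v_i$ are not parallel, whereas you run the contrapositive (equality for all $\ell$ forces every $v_\ell$ onto that line, contradicting non-collinearity), which is the same argument in reverse.
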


\begin{proof}
Consider arbitrary distinct points $x, y\in\RR^n/\RR\1$ and a scalar $\lambda\in(0,1)$.

For case a), we have $f_i(\lambda x+(1-\lambda) y-v_i)<\lambda f_i(x-v_i)+(1-\lambda)f_i(x-v_i)$.
Since $g$ is convex and strictly increasing and the functions $f_j$ convex, we obtain
$h(\lambda x+(1-\lambda) y)<\lambda h(x)+(1-\lambda)h(y)$.
So $h$ must be strictly convex.

For case b), at least one of the points $v_i$ is not on the line through $x$ and $y$.
Then $x-v_i$ and $y-v_i$ they are not parallel and the strict convexity of the unit ball defined by $f_i$ implies that $f_i(\lambda x+(1-\lambda) y-v_i)<\lambda f_i(x-v_i)+(1-\lambda)f_i(x-v_i)$.
The rest of the proof is identical to case a).
\end{proof}

\subsection{Examples} \label{subsec:examples}

Here we review the tropical location problems from literature that fall in our category, i.e. an optimum belongs to the tropical convex hull of the input.

%



\begin{example}[Tropical Fermat--Weber and Fr\'{e}chet problems] \label{eg:tropFW+Frechet}
    To the best of our knowledge, the first one-point location problems in tropical geometry are proposed by Lin et~al.~\cite{LSTY:2017}.
    They suggest the study of Fermat--Weber points and Fr\'{e}chet means under the symmetric tropical distance $d_{\trop}$.
    The goal was to relate them to tropical convexity for applications in phylogenetics.

    However, they noticed that tropical Fermat--Weber points might lie outside the tropical convex hull of the input points leading to medians that cannot be interpreted easily in biological applications \cite[Example~27]{LSTY:2017}.
    However, Theorem~\ref{th:locationTropConv} says that it is possible to find an optimum in the tropical convex hull.
    This was already noticed for the tropical Fermat--Weber points \cite[Lemma~3.5]{Page+Yoshida+Zhang:2020} but it was unknown, until now, for tropical Fr\'{e}chet means.
\end{example}

\begin{example}[Tropical center] \label{eg:tropCenter}
    Consider the case $f_i(x)=d_\triangle(v_i,x)$ and $g(y)=\max(y_1,\dots,y_m)$.
    This can be interpreted as the center of the minimum $\max$-tropical $L^1$ ball enclosing the points $v_1,\dots,v_m$.
    The tropical center appears in \cite[Example~23]{TropMedian}, but the details are omitted.
    
    If we choose representatives of the input points in $\cH=\{x\in\RR^n:{x_1+\dots+x_n=0}\}$, the optimum can be obtained by solving the linear program:
\begin{equation} \label{lp:trop_center}
    \begin{array}{ll@{}ll}
      \text{minimize}  & \displaystyle  n\cdot t & \\
      \text{subject to}& \displaystyle v_{ij} - x_j \ \leq \ t \,, & \quad \text{for } i\in[m] \text{ and } j\in[n] \\
                       & \displaystyle x_{1} + \dots + x_{n} \ = \ 0  &
    \end{array}.
  \end{equation}

    Note that the $x$-coordinates of the optimal solutions are equal, modulo $\RR\1$, to the $x$-coordinates of the linear program
\begin{equation} \label{lp:equi_trop_center}
    \begin{array}{ll@{}ll}
      \text{minimize}  & \displaystyle  n\cdot t+\sum_{j=1}^n x_j & \\
      \text{subject to}& \displaystyle v_{ij} - x_j \ \leq \ t \,, & \quad \text{for } i\in[m] \text{ and } j\in[n] 
    \end{array}.
  \end{equation}

    Let $(t^\star,x^\star)$ an optimal solution of \eqref{lp:equi_trop_center}.
    For any solution of \eqref{lp:equi_trop_center} we have $t+x_j\geq\max_{i\in[m]}v_{ij}=:V_j$.
    In particular, $x^\star$ will have the smallest entries if we actually have equality: $t^\star+x^\star_j=V_j$, otherwise we can replace $x^\star$ by some $x^\star-\varepsilon e_i$ to minimize the objective function.
    This implies $x^\star = V$ modulo $\RR\1$; in particular, the solution is unique in $\torus{n}$.

    Even if we do not have $g$ strictly increasing, the uniqueness and Theorem~\ref{th:locationTropConv} ensures that the optimum is in the tropical convex hull.
    However, this could have been noticed from the closed form $V=\bigvee_i v_i$ for $v_1,\dots,v_m\in\cH$.
\end{example}

\begin{example}[Transportation problems] \label{eg:transport}
    Consider $\lambda_1,\dots,\lambda_n>0$ and $\triangle(\lambda)$ the simplex in $\torus{n}$ whose vertices are $e_i/\lambda_i$.
    Then $\gamma_{\triangle(\lambda)}(x)=\sum_{i}\lambda_i x_i-\left(\sum_i\lambda_i\right)\min_j x_j$
    is the gauge on $\torus{n}$ whose unit ball is $\triangle(\lambda)$.
    
    The (weighted) Fermat--Weber problem $\sum_{i\in[m]}w_i\gamma_{\triangle(\lambda)}(x-v_i)$ is equivalent to a transportation problem and every transportation problem can be reduced to this case; to see this better, write it as a linear program after scaling the weights $w_i$ such that $\sum_i w_i=\sum_j \lambda_j$ (this change does not influence the optimum).
    This was firstly noticed in~\cite{TropMedian}, where the authors focused on the case $\lambda_1=\dots=\lambda_n$.
    The corresponding optimum is called a \emph{tropical median} in the work cited.

    The optimal point is called a $\lambda$-splitter by Tokuyama and Nakano~\cite{Tokuyama+Nakano:1995}, but no metric interpretation was mentioned. 
    The authors gave a condition of partitioning the space in $n$ region in an equal fashion with some weights coming from $\lambda$ and $w$; this can be seen as a reinterpretation of the first-order optimality condition for the corresponding Fermat--Weber problem. 
    As a $\lambda$-splitter, it appeared in statistics~\cite{Gallegos+Ritter:2010} and as a particular case of Minkowski partition problems~\cite{AHA:1998}.
\end{example}

\begin{example}[Locating tropical hyperplanes]
    The tropical hyperplanes are parametrized by $\RR^n/\RR\1$ by their identification with their apex.
    Moreover, we have $d_{\trop}(a,H_x^{\max})=(x-a)_{(2)}-(x-a)_{(1)}$.
    For a vector $y$, we denote by $y_{(k)}$ the $k$th smallest entry, also known as the $k$th order statistic.
    Note that the aforementioned distance is $\triangle$-star-quasiconvex with apex $a$; the easiest to see this is noticing that the second order statistic is increasing.
    Therefore, our general location problems cover the case of locating tropical hyperplanes.
    
    The best-fit tropical hyperplane with with $L^1$ error, i.e. $g$ is the $L^1$ norm, was considered by Yoshida, Zhang, and Zhang as part of \emph{tropical principal component analysis}~\cite{YZZ:2019}.
    
    The case of $L^\infty$ error was considered by Akian et al. \cite{tropicalLinearRegression} for applications to auction theory and called \emph{tropical linear regression}.
    They also show that the problem is polynomial-time equivalent to mean-payoff games \cite[Corollary~4.15]{tropicalLinearRegression} and, using $d_{\trop}(a,H_x^{\max})=d_{\trop}(x,H_a^{\min})$, that it is dual to the problem of finding the largest inscribed ball in the tropical convex hull of the input points \cite[Theorem~4.6]{tropicalLinearRegression}.  
\end{example}

\begin{figure}
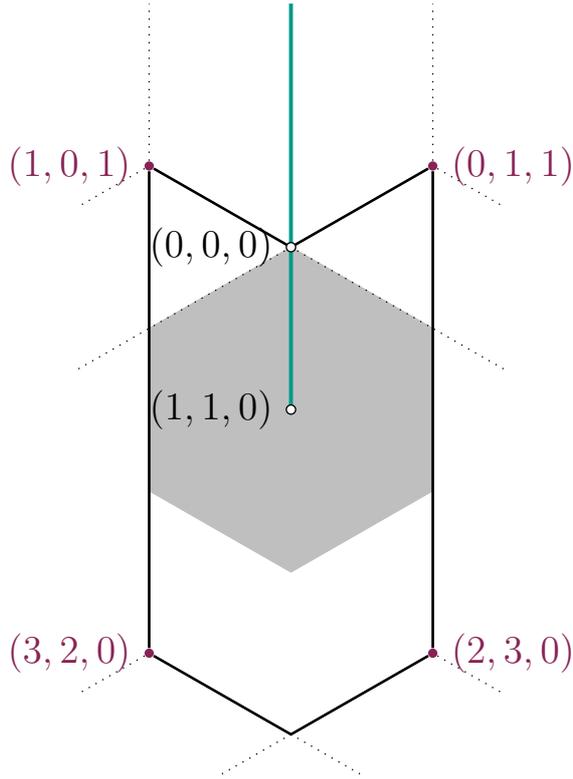

    \centering
         \includestandalone[width=0.5\textwidth]{tropical_locations}
    \caption{Input points (purple) with their convex hull (black boundary) and various locations from the examples of \textsection\ref{subsec:examples}; see the discussion after the examples}
    \label{fig:trop-locations}
\end{figure}

To end this subsection, we compute the optimal location from the examples above for specific input points.
We consider the points from \cite[\textsection 4]{tropicalLinearRegression} which are given by the columns of the matrix
\[
    V \ = \ \begin{pmatrix}
      0 & 1 & 3 & 2 \\
      1 & 0 & 2 & 3 \\
      1 & 1 & 0 & 0 \\
    \end{pmatrix}.
\]

For this input, there is a unique tropical Fr\'{e}chet point, $(1,1,0)$, but the set of tropical Fermat--Weber points is a hexagon, marked with grey in Figure~\ref{fig:trop-locations}.
We remark that $V$ has two axes of symmetry and $(1,1,0)$ is their intersection.

The point $(1,1,0)$ is also the tropical center of $V$, while the tropical median is $(0,0,0)$.
The latter point is the also the unique apex of the best-fit tropical hyperplane with $L^1$ error of \cite{YZZ:2019}.
It is also a solution of the tropical linear regression, but not the unique one.
The apices of the best-fit tropical hyperplanes with $L^\infty$ error are of the form $(\lambda,\lambda,0)$ with $\lambda\leq 1$ and their set is pictured with green in Figure~\ref{fig:trop-locations}.


\subsection{Regularization} \label{subsec:regular}
    In some cases, we cannot expend $g$ to be strictly increasing or all the dissimilarity functions $f_i$ to be strictly $\triangle$-star-quasiconvex.
    Hence, a minimization algorithm might return a point outside the $\max$-tropical convex hull of the input points, when there are multiple solutions.
    In this subsection, we show how we could try to arrive to a solution belonging to $\tconv^{\max}(v_1,\dots,v_m)$ through a regularized formulation.

    The idea of regularization is to consider a small parameter $\lambda>0$ and a nicely behaved function $f_{m+1}:\torus{n}\to\RR_{\geq 0}$ and try to solve the optimization problem
    \[\text{minimize }g\left(f_1(x),\dots,f_m(x)\right)+\lambda f_{m+1}(x).\]
    For our purposes, $f_{m+1}$ is nicely behaved if it is strictly $\triangle$-star-quasiconvex with a kernel from $\tconv^{\max}(v_1,\dots,v_m)$.
    An easy choice for $v$ is the tropical center from Example~\ref{eg:tropCenter}.

    This is also a location problem with $g_\lambda:\RR^{m+1}_{\geq 0}\to\RR$ given by $g_\lambda(x_1,\dots,x_m,x_{m+1})=g(x_1,\dots,x_m)+\lambda x_{m+1}$ and the optimality criterion is the function $h_\lambda:\torus{n}\to\RR$ given by $h_\lambda(x)=g_\lambda(f_1(x),\dots,f_{m+1}(x))$
    Note that $g_\lambda$ is strictly increasing in the $(m+1)$-st entry for every $\lambda>0$.

    Checking more carefully the proof of Theorem~\ref{th:locationTropConv}, the second statement holds     if $f_\ell$ is strictly $\triangle$-star-quasiconvex and $g$ strictly increasing in its $\ell$-th entry.
    We use this property for the regularization.
    Therefore, we obtain the following direct consequence of Theorem~\ref{th:locationTropConv}.

\begin{corollary} \label{cor:reg}
    For every $\lambda>0$, all the minima of $h_\lambda$ lie in $\tconv^{\max}(v_1,\dots,v_m)$.
\end{corollary}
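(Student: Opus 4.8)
The plan is to verify that the regularized location problem satisfies the hypotheses of the second (uniqueness-of-location) statement of Theorem~\ref{th:locationTropConv}, once we enlarge the list of sites to $v_1,\dots,v_m,v_{m+1}$, where $v_{m+1}$ is the chosen kernel of $f_{m+1}$. First I would note that $h_\lambda(x)=g_\lambda\bigl(f_1(x),\dots,f_{m+1}(x)\bigr)$ is literally of the form treated in that theorem: $g_\lambda:\RR^{m+1}_{\geq 0}\to\RR$ is increasing (it is $g$ composed with a coordinate projection, plus a nonnegative multiple of the last coordinate), each $f_i$ for $i\le m$ is $\triangle$-star-quasiconvex with kernel $v_i$, and $f_{m+1}$ is strictly $\triangle$-star-quasiconvex with kernel $v_{m+1}$. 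So Theorem~\ref{th:locationTropConv} applies verbatim and yields that all minima of $h_\lambda$ lie in $\tconv^{\max}(v_1,\dots,v_m,v_{m+1})$.

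The remaining point is that this larger tropical convex hull is actually equal to $\tconv^{\max}(v_1,\dots,v_m)$, which holds precisely because we chose $v_{m+1}\in\tconv^{\max}(v_1,\dots,v_m)$: adding a point already in a tropically convex set does not change its tropical convex hull. Hence all minima of $h_\lambda$ lie in $\tconv^{\max}(v_1,\dots,v_m)$, as claimed.

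The only subtlety is that the second statement of Theorem~\ref{th:locationTropConv}, as stated, asks that $g$ be \emph{strictly} increasing, whereas $g_\lambda$ is only strictly increasing in its last coordinate. This is exactly the refinement flagged in the paragraph preceding the corollary: re-examining the proof of Theorem~\ref{th:locationTropConv}, the strict inequality $h(x-\delta e_k)<h(x)$ is obtained by combining $f_\ell(x-\delta e_k)<f_\ell(x)$ for a strictly $\triangle$-star-quasiconvex $f_\ell$ with strict monotonicity of $g$ \emph{in the $\ell$-th entry only}. Taking $\ell=m+1$ and using that $g_\lambda$ is strictly increasing in its $(m+1)$-st entry for every $\lambda>0$, the same argument goes through. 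I expect this bookkeeping — making sure the index whose strict monotonicity we invoke is the same index in which $f_{m+1}$ is strictly $\triangle$-star-quasiconvex — to be the only place requiring care; everything else is a direct invocation of the theorem and the elementary fact about tropical convex hulls above.
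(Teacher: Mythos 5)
Your proposal is correct and follows essentially the same route as the paper: the paper's justification is precisely the observation that the proof of Theorem~\ref{th:locationTropConv} only needs $g$ strictly increasing in the $\ell$-th entry when $f_\ell$ is the strictly $\triangle$-star-quasiconvex function, applied with $\ell=m+1$. Your explicit extra step --- that adjoining the kernel $v_{m+1}\in\tconv^{\max}(v_1,\dots,v_m)$ does not enlarge the tropical convex hull --- is left implicit in the paper but is exactly the intended bookkeeping.
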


The influence of the term $f_{m+1}$ decreases as $\lambda$ goes to $0$.
If the functions are regular enough, we expect that a collection of optima $x^\star_\lambda$ of $h_\lambda$ to converge to an optimum of $h$.
In fact, $x^\star_\lambda$ will be an optimum of $h$ for $\lambda$ sufficiently small if $h$ is polyhedral convex and $f_{m+1}$ is Lipschitz continuous.

\begin{proposition} \label{prop:exactRegSol}
    If $h$ is polyhedral convex and $f_{m+1}$ is a convex function with sub-linear growth, then there exists $\lambda_0>0$ such that all minima of $h_\lambda$ are also minima of $h$ for every $\lambda<\lambda_0$.
\end{proposition}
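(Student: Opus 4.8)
The plan is to exploit the polyhedral structure of $h$ together with the fact that its minimum set $M = \argmin h$ is a (closed) polyhedron, and to show that for $\lambda$ small the perturbed objective $h_\lambda = h + \lambda f_{m+1}$ is minimized on $M$ at a point that is simultaneously a global minimum of $h$. Since $h$ is polyhedral convex, it is the maximum of finitely many affine functions; consequently there is a constant $\mu > 0$ (a Hoffman-type / sharpness constant) such that $h(x) - \min h \geq \mu \cdot \dist(x, M)$ for all $x$ in any fixed bounded region, and in fact, because $h$ is piecewise affine, the growth away from $M$ is genuinely linear: on the complement of $M$ the directional slope of $h$ pointing away from $M$ is bounded below by some $\mu > 0$. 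This $\mu$ is the key quantity.

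The main steps, in order, are as follows. First, I would reduce to the case where the minimum set $M$ of $h$ is bounded (modulo $\RR\1$); if it is not, one restricts attention to a large box polytrope containing an optimum, using that $f_{m+1}$ has sub-linear growth so that $h_\lambda \to \infty$ is still controlled and minima of $h_\lambda$ do not escape to infinity for small $\lambda$ — here the sub-linear growth of $f_{m+1}$ is exactly what prevents the regularizer from changing the behaviour at infinity. Second, using polyhedrality of $h$, extract the linear sharpness bound: there is $\mu>0$ with $h(x) \geq \min h + \mu\,\dist(x,M)$ on a neighbourhood of $M$ large enough to contain all relevant minimizers of $h_\lambda$. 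Third, let $L$ be a Lipschitz constant for $f_{m+1}$ on that same bounded region (a convex function with sub-linear growth is locally Lipschitz, so $L < \infty$ there). Fourth, choose $\lambda_0 = \mu / L$ (or anything strictly smaller): for $\lambda < \lambda_0$ and any $x \notin M$, writing $x'$ for the nearest point of $M$,
\[
  h_\lambda(x) - h_\lambda(x') \;\geq\; \mu\,\dist(x,M) - \lambda L\,\dist(x,M) \;=\; (\mu - \lambda L)\,\dist(x,M) \;>\; 0,
\]
so no point outside $M$ can minimize $h_\lambda$; hence every minimizer of $h_\lambda$ lies in $M = \argmin h$, which is the claim.

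The main obstacle is making the "linear sharpness" bound $h(x) \geq \min h + \mu\,\dist(x,M)$ precise and global enough: it holds automatically on each polyhedral cell of $h$ with a cell-dependent constant, and one must check that taking the minimum over the finitely many cells (and using that $M$ is itself a union of cells on which $h$ is constant) yields a single uniform $\mu > 0$ valid on all of the relevant region — this is where polyhedrality is genuinely used and where a careless argument could fail if $M$ were, say, only the intersection of a cell boundary rather than a full-dimensional union of cells. A secondary technical point is the localization at infinity when $M$ is unbounded: one argues that because $f_{m+1}$ grows sub-linearly while $h$ grows (at least) linearly in every unbounded direction of its epigraph transverse to its recession cone, the sum $h_\lambda$ still attains its minimum, and for $\lambda$ small that minimum is attained within a fixed bounded enlargement of $M$, reducing to the bounded case already handled.
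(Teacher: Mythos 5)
Your route is genuinely different from the paper's. The paper argues dually, via first-order optimality: at a minimum $m_\lambda$ of $h_\lambda$ one has $\0\in\partial h(m_\lambda)+\lambda\,\partial f_{m+1}(m_\lambda)$; Lipschitz continuity of $f_{m+1}$ (the paper's reading of ``sub-linear growth'') puts all subgradients of $f_{m+1}$ in one bounded set $B$, and polyhedrality of $h$ means $\partial h$ takes only finitely many closed values, so there is a uniform $\lambda_0$ with $\0\notin P+\lambda B$ for every value $P\not\ni\0$ and every $\lambda<\lambda_0$; hence $\0\in\partial h(m_\lambda)$. You instead argue primally, through a sharpness (Hoffman-type) bound $h(x)\geq\min h+\mu\,\dist(x,M)$ for $M=\argmin h$ and the comparison $h_\lambda(x)-h_\lambda(x')\geq(\mu-\lambda L)\,\dist(x,M)$ with $x'$ the nearest point of $M$. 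This is a legitimate alternative: the sharpness bound does hold \emph{globally} for a polyhedral convex function (apply Hoffman's error bound to the polyhedron $M=\{x:\langle a_i,x\rangle+b_i\leq\min h\}$ when $h=\max_i(\langle a_i,x\rangle+b_i)$), so the uniformity over cells you worry about is not an obstacle, and your argument produces the explicit threshold $\lambda_0=\mu/L$, comparable to the quantitative remark in the paper's appendix.

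The genuine gap is in your first and last steps. Because you only take $L$ to be a \emph{local} Lipschitz constant on ``a bounded region containing all relevant minimizers of $h_\lambda$,'' you must know in advance where the minimizers of $h_\lambda$ lie, and your justification (``minima of $h_\lambda$ do not escape to infinity for small $\lambda$,'' reduction to bounded $M$, attainment of the minimum of $h_\lambda$) is asserted rather than proved; as written it is circular, since the region is defined in terms of the very minimizers you are trying to locate. The fix is to drop the localization entirely: for a finite convex function, an at-most-linear growth bound forces all subgradients to be uniformly bounded, i.e.\ $f_{m+1}$ is \emph{globally} Lipschitz with some constant $L$ (this is exactly the hypothesis the paper uses in its appendix version). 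With a global $L$ and the global Hoffman bound, your displayed inequality applies verbatim to an arbitrary minimizer $x$ of $h_\lambda$, wherever it sits, and yields $x\in M$ for $\lambda<\mu/L$; no boundedness of $M$, no control of minimizers at infinity, and no attainment argument is needed (if $h_\lambda$ has no minimum the statement is vacuous). Note also that a strictly sub-linear ($o(\|x\|)$) convex function is necessarily constant, so ``sub-linear growth'' here can only mean the linear-growth/Lipschitz condition just described.
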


The proof is quite technical using the differential theory from convex analysis so it is given in the appendix.
We stress that Proposition~\ref{prop:exactRegSol} can be useful for studying the tropical Fermat--Weber problem from \cite{Lin+Yoshida:2018}.
Without regularization, it has undesirable behaviour for applications to biology; cf.~\cite[\textsection 5.2]{TropMedian}.




\section{Location problems with tropically convex sites} \label{sec:locProbTropConvSites}

Location problems can appear also when facilities are regions of the ambient space and not only points.
Here, we consider such a generalization where the sites are tropically convex sets.

In the previous section, we used different distances to the input points.
Here, we will measure our dissimilarities in a uniform way, by fixing an increasing function $\gamma:\RR^n_{\geq 0}\to\RR$ and considering $d_\gamma(x,y)=\widehat\gamma(y-x)$.
We than say that $d_\gamma$ is $\triangle$-star-quasiconvex; if $\gamma$ is strictly increasing we say that $d_\gamma$ is strictly $\triangle$-star-quasiconvex.
This allows a clear definition of a distance from a region to a point: $d_\gamma(A,x):=\inf_{y\in A}d_\gamma(y,x)$.




For a closed $\max$-tropical cone $K\subseteq\RR^n$ we define the projection $\pi_K:\RR^n\to K$ as $\pi_K(x)=\max\{y\in K:y\leq x\}$.
We note that $\pi_K(x+\lambda\1)=\pi_K(x)+\lambda\1$ for every $x\in\RR^n$ and $\lambda\in\RR$, so it induces a well-defined function $\pi_{K/\RR\1}:\torus{n}\to K/\RR\1$ called the \emph{tropical projection} onto the $\max$-tropically convex set $K/\RR\1$.


The following lemma gives an explicit formula for the tropical projection and it characterizes it as a closest point under $d_\gamma$.
We omit the proof, as it is a classical result, shown when $\gamma$ is the maximum norm in \cite[\textsection 3]{CGQ:2004} and for a general tropical $L^p$ norm in \cite[Theorem~4.6]{Luo:2018}.

\begin{lemma} \label{lem:tropProj}
Let $A$ be a closed $\max$-tropically convex set.
Then the tropical projection $\pi_A(x)$ of a point $x$ has the entries
\begin{equation} \label{eq:tropProj}
    \pi_A(x)_i=\max_{a\in A}\left(a_i+\min_{j\in[n]}(x_j-a_j)\right).  
\end{equation}

Moreover, $d_\gamma(A, x)=d_\gamma(\pi_A(x), x)$ and $\pi_A(x)$ is the unique point whose distance to $x$ equals $d_\gamma(A,x)$ if $d_\gamma$ is strictly $\triangle$-star-quasiconvex.
\end{lemma}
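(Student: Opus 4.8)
The plan is to work with a nonempty closed $\max$-tropical cone $K\subseteq\RR^n$ representing $A=K/\RR\1$, so that $\pi_A$ is induced by $\pi_K(x)=\max\{y\in K:y\le x\}$; one first notes that the right-hand side of \eqref{eq:tropProj} is unchanged when $a$ is replaced by $a+\mu\1$, hence descends to a well-defined quantity on $\torus n$, and that $\pi_K(x+\lambda\1)=\pi_K(x)+\lambda\1$ is immediate from the formula. First I would show that \eqref{eq:tropProj} computes $\pi_K$. For $a\in K$ the shifted point $a+\min_j(x_j-a_j)\1$ again lies in $K$ (a $\max$-tropical cone is closed under adding multiples of $\1$, since $(x+\lambda\1)\vee(x+\mu\1)=x+\max(\lambda,\mu)\1$ with $\lambda,\mu$ arbitrary) and is $\le x$ coordinatewise; hence the vector $p$ with $p_i:=\sup_{a\in K}\bigl(a_i+\min_j(x_j-a_j)\bigr)$ is finite and satisfies $p\le x$. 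Conversely, if $y\in K$ and $y\le x$ then $\min_j(x_j-y_j)\ge0$, so taking $a=y$ gives $y_i\le p_i$; thus $p$ dominates every element of $K$ lying below $x$. It remains to see $p\in K$: since $K$ is closed under finite coordinatewise maxima and topologically closed, the coordinatewise supremum of any $\RR^n$-bounded family in $K$ again lies in $K$ (for each coordinate $i$ and each $k$ pick $b^{(i,k)}\in K$ with $b^{(i,k)}_i>p_i-1/k$; then $\bigvee_i b^{(i,k)}\in K$ lies below $p$ and converges to $p$, and closedness applies). Then $p=\pi_K(x)$; moreover $\min_j(x_j-p_j)=0$, for otherwise $p+c\1\in K$ with $c=\min_j(x_j-p_j)>0$ would contradict maximality, and then taking $a=p$ shows each supremum in \eqref{eq:tropProj} is attained, justifying ``$\max$''.

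Next I would prove the closest-point property, working with representatives in $K$. Given $x$ and any $y\in K$, set $\tilde y:=y+\min_j(x_j-y_j)\1\in K$, so that $\tilde y\le\pi_K(x)$ and $x-\tilde y=\widehat{x-y}$. Then $0\le x-\pi_K(x)\le x-\tilde y=\widehat{x-y}$, and since $\min_j\bigl(x_j-\pi_K(x)_j\bigr)=0$ we have $\widehat{x-\pi_K(x)}=x-\pi_K(x)$. Because $\gamma$ is increasing and both vectors lie in $\RR^n_{\ge0}$, this yields $\widehat\gamma(x-\pi_K(x))\le\widehat\gamma(x-y)$, i.e. $d_\gamma(\pi_K(x),x)\le d_\gamma(y,x)$ for every $y\in A$; together with $\pi_A(x)\in A$ this gives $d_\gamma(A,x)=d_\gamma(\pi_A(x),x)$. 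This argument exhibits the infimum directly as a minimum, so no continuity of $\gamma$ is needed. For uniqueness when $\gamma$ is strictly increasing, if some $y\in A$ also attains $d_\gamma(A,x)$ then the displayed inequality becomes an equality $\gamma(x-\pi_K(x))=\gamma(\widehat{x-y})$ with $x-\pi_K(x)\le\widehat{x-y}$; strict monotonicity forces $x-\pi_K(x)=\widehat{x-y}=x-\tilde y$, hence $\pi_A(x)=\tilde y=y$ in $\torus n$.

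The step I expect to be the main obstacle is showing $p\in K$, namely that the coordinatewise supremum of a bounded, possibly infinite, subfamily of the closed $\max$-tropically convex set $K$ again belongs to $K$: this is exactly where topological closedness is used rather than mere closure under finite tropical combinations, and it is also what guarantees that $\pi_K(x)$ is genuinely attained as a maximum. Everything else reduces to short monotonicity bookkeeping using that $\gamma$ is increasing.
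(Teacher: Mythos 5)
Your proof is correct. Note that the paper does not actually supply an argument for Lemma~\ref{lem:tropProj}: it is stated with the proof omitted as a classical fact, citing \cite[\textsection 3]{CGQ:2004} (for the maximum norm) and \cite[Theorem~4.6]{Luo:2018} (for tropical $L^p$ norms). What you have written is a self-contained reconstruction of that classical residuation argument, and all the steps check out: the shift $a\mapsto a+\min_j(x_j-a_j)\1$ stays in $K$ and lies below $x$, every $y\in K$ with $y\le x$ is dominated by $p$, closedness of $K$ under finite maxima plus topological closedness puts $p$ in $K$ (and maximality forces $\min_j(x_j-p_j)=0$, so the supremum in \eqref{eq:tropProj} is attained at $p$ itself), and the inequality $0\le x-\pi_K(x)\le \widehat{x-y}$ combined with (strict) monotonicity of $\gamma$ gives the closest-point property and uniqueness exactly as you argue. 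Your version even adds something the citations do not state in this generality, namely that only monotonicity of $\gamma$ (no continuity, no norm structure) is needed. One small wording slip: in the parenthetical proving $p\in K$ you should pick $b^{(i,k)}$ from the bounded family whose coordinatewise supremum is $p$ (e.g.\ among the shifted points $\tilde a$), not from $K$ at large, since an arbitrary element of $K$ with large $i$th coordinate need not lie below $p$; with that reading the finite maxima $\bigvee_i b^{(i,k)}$ lie below $p$, converge to it, and closedness finishes the step as you intend.
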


\begin{remark}
    In fact, the maximum expression of the tropical projection from Lemma~\ref{lem:tropProj} can be taken over the extremal points, in the case of tropical polytopes \cite[Propositon~5.24]{ETC}.
    A similar result seems similar for general convex sets, but the form above is sufficient for our purposes.
\end{remark}

From now on, our given sites are closed $\max$-tropically convex sites $A_1,\dots,A_m$ in $\torus{n}$.
Similar to section~\ref{sec:locProb}, the objective function is $h=g\left(d_{\gamma}(A_1,x),\dots,d_{\gamma}(A_m,x)\right)$, where $g:\RR^m_{\geq 0}\to\RR_{\geq 0}$ is increasing.

\begin{theorem} \label{th:locationSetSites}
There exists an minimum of $h$ lying in the tropical convex hull of the input $\tconv^{\max}(A_1\cup\dots\cup A_m)$.
Moreover, if $g$ and $\gamma$ are strictly increasing, then all the minima of $h$ lie in $\tconv^{\max}(A_1\cup\dots\cup A_m)$.
\end{theorem}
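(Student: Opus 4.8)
The plan is to reduce Theorem~\ref{th:locationSetSites} to the point-site case, Theorem~\ref{th:locationTropConv}, by a majorization trick built on the tropical projection. Fix a minimum $x$ of $h$ (it exists by hypothesis) and put $v_i:=\pi_{A_i}(x)\in A_i$ for each $i\in[m]$. By Lemma~\ref{lem:tropProj}, $d_\gamma(A_i,x)=d_\gamma(v_i,x)=\widehat\gamma(x-v_i)$, while for every $y\in\torus n$ the point $v_i$ is merely one admissible competitor in the infimum defining $d_\gamma(A_i,y)$, so $d_\gamma(A_i,y)\le\widehat\gamma(y-v_i)$. Consequently the auxiliary objective $\tilde h(y):=g\bigl(\widehat\gamma(y-v_1),\dots,\widehat\gamma(y-v_m)\bigr)$ — which is precisely a location objective of the kind studied in Section~\ref{sec:locProb}, with $f_i(y)=\widehat\gamma(y-v_i)$ having kernel $v_i$ and common increasing $\gamma_i=\gamma$ — satisfies $\tilde h\ge h$ pointwise and $\tilde h(x)=h(x)$. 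From $\tilde h(y)\ge h(y)\ge h(x)=\tilde h(x)$ for all $y$ one reads off that $x$ is a minimum of $\tilde h$; in particular $\tilde h$ attains its infimum, so all hypotheses of Theorem~\ref{th:locationTropConv} hold for $\tilde h$.

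For the existence statement I would then invoke the first part of Theorem~\ref{th:locationTropConv} to obtain a minimum $x^\star$ of $\tilde h$ with $x^\star\in\tconv^{\max}(v_1,\dots,v_m)$. Since $h(x^\star)\le\tilde h(x^\star)=\tilde h(x)=h(x)=\min h$, the point $x^\star$ is also a minimum of $h$; and because each $v_i\in A_i$ and $\tconv^{\max}$ is monotone under inclusion of generating sets, $x^\star\in\tconv^{\max}(v_1,\dots,v_m)\subseteq\tconv^{\max}(A_1\cup\dots\cup A_m)$, as required.

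For the ``moreover'' part, suppose $g$ and $\gamma$ are strictly increasing and let $x$ be an arbitrary minimum of $h$; carry out the construction above with $v_i=\pi_{A_i}(x)$. Now every $f_i(y)=\widehat\gamma(y-v_i)$ is strictly $\triangle$-star-quasiconvex (because $\gamma$ is strictly increasing), and $g$ is strictly increasing, so the second part of Theorem~\ref{th:locationTropConv} forces \emph{all} minima of $\tilde h$ into $\tconv^{\max}(v_1,\dots,v_m)$. Since $x$ is one of them, $x\in\tconv^{\max}(v_1,\dots,v_m)\subseteq\tconv^{\max}(A_1\cup\dots\cup A_m)$.

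The one point that needs a little care — and the only place where the set structure genuinely enters — is establishing that $x$ is a bona fide minimizer of the auxiliary problem $\tilde h$, not merely a point where $h$ and $\tilde h$ coincide. This is exactly what the chain $\tilde h\ge h\ge h(x)=\tilde h(x)$ provides, and it uses both that $v_i\in A_i$ (to get $\tilde h\ge h$) and that $\pi_{A_i}(x)$ realizes $d_\gamma(A_i,x)$, i.e. Lemma~\ref{lem:tropProj} (to get $\tilde h(x)=h(x)$). After that, the statement is a black-box consequence of Theorem~\ref{th:locationTropConv} together with monotonicity of the $\max$-tropical convex hull; no further estimates are needed, and the triangle inequality plays no role, consistently with the philosophy stressed in Section~\ref{sec:locProb}.
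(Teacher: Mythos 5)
Your proposal is correct, and it takes a genuinely different route from the paper. The paper proves the theorem directly by rerunning the descent argument of Theorem~\ref{th:locationTropConv} with set sites: for a minimum $x$ outside $\tconv^{\max}(A_1\cup\dots\cup A_m)$ it picks a coordinate $\ell$ via \cite[Proposition~5.37]{ETC}, uses closedness of the $A_i$ to find a small $\delta$ so that the projections $\pi_{A_i}$ are unchanged at $x-\delta e_\ell$ (via the explicit formula~\eqref{eq:tropProj}), observes that all distances $d_\gamma(A_i,\cdot)$ weakly (strictly, if $\gamma$ is strictly increasing) decrease along $-e_\ell$, and then repeats the termination argument of the point case. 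You instead reduce to the point case by a majorization trick: freeze a minimizer $x$, replace each site by $v_i=\pi_{A_i}(x)$, and compare $h$ with the point-site objective $\tilde h$, using Lemma~\ref{lem:tropProj} to get $\tilde h(x)=h(x)$ and $v_i\in A_i$ to get $\tilde h\ge h$, so that $x$ minimizes $\tilde h$ and Theorem~\ref{th:locationTropConv} applies as a black box; monotonicity of $\tconv^{\max}$ finishes both parts. Both arguments need closedness of the sites (you through Lemma~\ref{lem:tropProj}, the paper through the open-ball step and~\eqref{eq:tropProj}), and both need the standing assumption that $h$ attains a minimum. What your route buys is modularity and brevity --- no repetition of the descent/termination argument, and in fact a slightly sharper localization, since the minimum you produce lies in $\tconv^{\max}\bigl(\pi_{A_1}(x),\dots,\pi_{A_m}(x)\bigr)$; what the paper's route buys is a constructive descent procedure showing how an arbitrary minimum can be moved into the hull, mirroring the proof structure of the point case.
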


\begin{proof}
    If $x\notin\tconv^{\max}(A_1\cup\dots\cup A_m)$, then \cite[Proposition~5.37]{ETC} entails the existence of an index $\ell\in[n]$ such that $\min_{j\neq\ell}({x_j-a_j})<{x_\ell-a_\ell}$ for every $a\in\tconv^{\max}(A_1\cup\dots\cup A_m)$. 
    Since $A_1,\dots,A_m$ are closed sets, then there exists an open ball around $x$ not intersecting the union of these sets.
    Thus, for $\delta>0$ sufficiently small and $y=x-\delta e_\ell$ we have $\min_j(y_j-a_j)=\min_j(x_j-a_j)$ for every $a\in\tconv^{\max}(A_1\cup\dots\cup A_m)$.
    Therefore, equation~\eqref{eq:tropProj} implies $\pi_{A_i}(y)=\pi_{A_i}(x)$ for all $i\in[m]$.

    Note that $y-\pi_{A_i}(y)=x-\pi_{A_i}(x)-\delta e_\ell\leq x-\pi_{A_i}(x)$.
    Since $\gamma$ is increasing, we have $d_{\gamma}(A_i,y)=\gamma(y-\pi_{A_i}(y))\leq\gamma(x-\pi_{A_i}(x))=d_{\gamma}(A_i,x)$ for every $i\in[m]$.
    Moreover, if $\gamma$ is strictly increasing we get $d_{\gamma}(A_i,y)<d_{\gamma}(A_i,x)$.
%

    In other words, going from $x$ in the direction $-e_{\ell}$ we obtain a decrease in all the distances $d_{\gamma}(A_i,x)$; in particular, a decrease of $h$.
    Using this observation, the rest of the proof is identical to the proof of Theorem~\ref{th:locationTropConv}.
\end{proof}

\section{Tropically convex consensus methods} \label{sec:tropConsensus}

In this section, we focus on applications to phylogenetics---the study of evolutionary history of species \cite{Felsenstein:2003,SempleSteel:2003}.
The information is represented as an evolutionary tree, or phylogeny, which are trees whose leaves are labeled by the name of the species.
In this paper, we will deal only with trees that encode the evolution from a common ancestor and possess a molecular clock.

To be more formal, we have a finite set $\sX$ containing the names of the species and a rooted tree whose leaves are in bijection with $\sX$; the root corresponds to the most recent ancestor of all the species into consideration.
The time is represented as positive weights on the edges, which gives a way to measure distances between nodes in the trees.
What is more we assume that the distance from the root to any leaf is the same; it means that the same time is measured from the evolution of the most recent common ancestor (MRCA) of all species and any element of $\sX$.
Such trees are called \emph{equidistant}.

To a rooted phylogeny $T$ we associate a distance matrix $D\in\RR^{\sX\times\sX}$ where the entry $D_{ij}$ represents the distance between the leaves labelled $i$ and $j$ in $T$.
It is known that $T$ is equidistant if and only if $D$ is ultrametric \cite[Theorem~7.2.5]{SempleSteel:2003}, i.e.
\begin{equation} \label{eq:ultrametric}
    D_{ij}\leq\max(D_{ik},D_{kj})\quad\forall\, i,j,k\in\sX.  
\end{equation}
Hence, we will not distinguish between equidistant trees and ultrametric matrices in the rest of the paper.

Because $D$ is symmetric and has zero entries on the diagonal, we can see it as a point of $\RR^{\binom{\sX}{2}}$.
We define the \emph{tree space} $\cT_\sX$ as the image of space of all ultrametrics in $\torus{\binom{\sX}{2}}$.
Due to \cite[Proposition~3]{Ardila+Klivans:2006}, this is homeomorphic to the BHV space defined in \cite{BHV}.
We note that the ultrametric condition~\eqref{eq:ultrametric} implies that $\cT_\sX$ is $\max$-tropically convex.

We are interested in \emph{consensus methods}: given as input multiple phylogenies on $\sX$, find an evolutionary tree on $\sX$ being as similar as possible to the input trees.
This is a common problem in evolutionary biology, as multiple distinct trees arise from the statistical procedures or from the multiple methods to reconstruct phylogenies from different data; see \cite{Bryant:2003} or \cite[Chapter~30]{Felsenstein:2003} for details.

A consensus method can be seen as a location statistic in the tree space.
Since the latter is $\max$-tropically convex, there were many attempts to exploit this geometric structure to obtain relevant information \cite{TropMedian,LSTY:2017,Lin+Yoshida:2018,Page+Yoshida+Zhang:2020}.
We are interested in tropically convex consensus methods, defined in \cite{TropMedian}.

\begin{definition}
    A consensus method $c$ is \emph{tropically convex} if $c(T_1,\dots,T_m)\in\tconv^{\max}(T_1,\dots,T_m)$
    for every $m\geq 1$ and $T_1,\dots,T_m\in\cT_\sX$.
\end{definition}

The location problems discussed in the previous section give rise to tropically convex consensus methods.
Note that we do not need to impose the restriction that the optimum to lie in $\cT_\sX$.
It is automatically satisfied from the tropical convexity of $\cT_\sX$ and Theorem~\ref{th:locationTropConv}.
This observation ensured that tropical median consensus methods are fast to compute \cite[\textsection 5.3]{TropMedian}.

Tropically convex consensus methods are particularly interesting because they preserve relationships from the input trees.
To explain this more clearly, we firstly need some terminology: two subsets of taxa $A,B$ form a \emph{nesting} in $T$, and we denote it by $A<B$, if the MRCA of $A$ in $T$ is a strict descendant of the MRCA of $A\cup B$.
If $D$ is the ultrametric associated to $T$, then we can write the condition as
\begin{equation} \label{eq:nesting}
    \max_{i,j\in A}D_{ij}<\max_{k,\ell\in A\cup B}D_{k\ell}.
\end{equation}

We say that a consensus method $c$ is \emph{Pareto on nestings} if $c(T_1,\dots,T_m)$ displays the nesting $A<B$ whenever $A<B$ appears in all input trees $T_1,\dots,T_m$.
The consensus method $c$ is called \emph{co-Pareto on nestings} if $c(T_1,\dots,T_m)$ does not display the nesting $A<B$ unless $A<B$ appears in some input tree $T_i$.
These conditions are desirable for consensus methods \cite{Bryant:2003,propSupertree}.

\begin{remark} \label{rmk:geomPareto}
    It is useful to see these properties from a geometric point of view.
    Consider $\cT_\sX(A<B)$ the subset of $\cT_\sX$ consisting of trees displaying the nesting $A<B$; it is described by \eqref{eq:nesting}.
    We also make the notation $\cT_\sX(A\not <B)$ for the complement ${\cT_\sX\setminus\cT_\sX({A<B})}$, which is the set of trees not displaying $A<B$.
    
    Then $c$ is Pareto on nestings if and only if for every nesting $A<B$ and trees $T_1,\dots,T_m\in\cT_\sX(A<B)$ we have $c(T_1,\dots,T_m)\in\cT_\sX(A<B)$.
    We also note that $c$ is co-Pareto on nestings if and only if for every nesting $A<B$ and trees $T_1,\dots,T_m\in\cT_\sX(A\not <B)$ we have $c(T_1,\dots,T_m)\in\cT_\sX(A\not <B)$.
\end{remark}

The next result shows that tropically convex consensus methods have both Pareto and co-Pareto properties, being an improved version of \cite[Proposition~22]{TropMedian}.
Thus, we have a large class of consensus methods satisfying both properties.
This is remarkable, as no such consensus method is listed in the surveys \cite{Bryant:2003,propSupertree}.

\begin{proposition} \label{prop:consensusPareto}
    Tropically convex consensus methods are Pareto and co-Pareto on nestings.
\end{proposition}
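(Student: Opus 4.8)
The plan is to reduce each of the two properties (Pareto and co-Pareto on nestings) to a statement about tropical convex hulls, using the geometric reformulations collected in Remark~\ref{rmk:geomPareto}. For Pareto on nestings, suppose $T_1,\dots,T_m\in\cT_\sX(A<B)$. By Remark~\ref{rmk:geomPareto} it suffices to show that $\tconv^{\max}(T_1,\dots,T_m)\subseteq\cT_\sX(A<B)$, since a tropically convex consensus method outputs a point of this hull. So the core task is: the set $\cT_\sX(A<B)$, cut out of $\cT_\sX$ by the strict inequality \eqref{eq:nesting}, is $\max$-tropically convex (intersected with $\cT_\sX$). The right-hand side $\max_{k,\ell\in A\cup B}D_{k\ell}$ is a $\max$-tropical linear form in the coordinates $D$, and so is the left-hand side $\max_{i,j\in A}D_{ij}$; thus \eqref{eq:nesting} says one tropical linear form strictly dominates another. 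I would check directly that if two ultrametrics $D,D'$ each satisfy $L(D)<M(D)$ and $L(D')<M(D')$ for such forms $L,M$ (with the coordinates of $L$ a subset of those of $M$), then any $\max$-tropical combination $D''=(\lambda+D)\vee(\mu+D')$ also satisfies $L(D'')<M(D'')$: because $\max$ distributes over $\max$-tropical scaling and addition, $L(D'')=(\lambda+L(D))\vee(\mu+L(D'))$ and similarly for $M$, and a strict inequality is preserved under taking the $\vee$ of the two sides coordinatewise. Extending from two generators to finitely many is immediate.

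For co-Pareto on nestings, suppose $T_1,\dots,T_m\in\cT_\sX(A\not<B)$, i.e.\ each satisfies $\max_{i,j\in A}D_{ij}=\max_{k,\ell\in A\cup B}D_{k\ell}$ (equality, since $\le$ always holds by ultrametricity and $A\subseteq A\cup B$). Again by Remark~\ref{rmk:geomPareto} I want $\tconv^{\max}(T_1,\dots,T_m)\subseteq\cT_\sX(A\not<B)$, so I need: the equality locus $L(D)=M(D)$ of two $\max$-tropical linear forms with $\supp(L)\subseteq\supp(M)$ is $\max$-tropically convex. This follows by the same distributivity computation: if $L(D)=M(D)$ and $L(D')=M(D')$, then $L(D'')=(\lambda+L(D))\vee(\mu+L(D'))=(\lambda+M(D))\vee(\mu+M(D'))=M(D'')$. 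So both $\cT_\sX(A<B)$ and $\cT_\sX(A\not<B)$ are $\max$-tropically convex, and a tropically convex consensus method cannot leave either set when all inputs lie in it.

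The main obstacle — really the only subtlety — is making sure the coordinates are handled correctly: points of $\cT_\sX$ live in $\torus{\binom{\sX}{2}}$, so tropical combinations are taken modulo $\RR\1$, and one must confirm that the forms $L$ and $M$ are genuinely $\max$-tropical linear forms in these coordinates (they are: each is a maximum of a subset of the coordinates $D_{ij}$, with coefficient $0$) and that shifting all coordinates by a common constant shifts both $L$ and $M$ by that same constant, so the inequality \eqref{eq:nesting} and the equality defining $\cT_\sX(A\not<B)$ are well-defined on $\torus{\binom{\sX}{2}}$. Once that bookkeeping is in place, the distributivity argument is a two-line computation in each case, and the proposition follows by combining it with Remark~\ref{rmk:geomPareto} and the defining property of tropically convex consensus methods. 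I would also remark that, unlike \cite[Proposition~22]{TropMedian}, no continuity or uniqueness of the optimizer is needed — the argument is purely about membership in a tropically convex set.
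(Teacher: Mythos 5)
Your proof is correct and takes essentially the same route as the paper: both arguments reduce the statement, via Remark~\ref{rmk:geomPareto} and the defining property of tropically convex consensus methods, to showing that $\cT_\sX(A<B)$ and $\cT_\sX(A\not<B)$ are $\max$-tropically convex, the former being cut out by the open tropical halfspace inequality \eqref{eq:nesting} and the latter by its complementary (closed) halfspace intersected with $\cT_\sX$. Your explicit distributivity computation simply spells out the paper's one-line assertion that these sets are tropical halfspaces, so there is no substantive difference in approach.
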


\begin{proof}
    For every nesting $A<B$, the set $\cT_\sX(A<B)$ is $\max$-tropically convex as \eqref{eq:nesting} describes an open $\max$-tropical halfspace.
    Whence, Remark~\ref{rmk:geomPareto} implies that tropically convex consensus methods are Pareto on nestings.
    
    Similarly, the set $\cT_\sX(A\not <B)$ is $\max$-tropically convex as it is the intersection of $\cT_\sX$ with the tropical halfspace defined by the inequality $\max_{i,j\in A}D_{ij}\geq\max_{k,\ell\in A\cup B}D_{k\ell}$.
    Remark~\ref{rmk:geomPareto} implies also the co-Pareto property.
\end{proof}

The Pareto property gives a unanimity rule: nestings present in all the trees are also present in the consensus.
One may wonder if this rule can be relaxed as there exist (super)majority-rule consensus trees commonly used for the unweighted case; they are denoted $M_\ell$ by Felsenstein in \cite[Chapter~30]{Felsenstein:2003}.
Indeed, one can find such a rule for tropical medians~\cite{TropMedian}.

\begin{proposition} \label{prop:majRuleTropMedian}
    A nesting appears in the tropical median consensus tree if it appears in a proportion of the input trees greater than $1-1/\binom{n}{2}$.
    Moreover, a nesting will not appear in the tropical median consensus tree if it occurs in a proportion less than $1/\binom{n}{2}$ of the input trees.
\end{proposition}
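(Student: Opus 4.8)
The strategy is to turn the two claims into statements about the tropical median as a splitter, exploiting the explicit description of the tropical median as a $\lambda$-splitter (with $\lambda = \mathbf 1$) and the characterization of nestings in Remark~\ref{rmk:geomPareto}. Recall that the tropical median of ultrametrics $D^{(1)},\dots,D^{(m)}$ is the minimizer of $\sum_k \gamma_{\triangle(\mathbf 1)}(x - D^{(k)})$, and $\gamma_{\triangle(\mathbf 1)}(x - D^{(k)}) = d_\triangle(D^{(k)}, x)$. Let $N = \binom{n}{2}$ be the ambient dimension (so the tree space sits in $\torus{N}$, with $n = \#\sX$; note the proposition writes $\binom n2$ for what is the coordinate count). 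A nesting $A < B$ corresponds to a single coordinate inequality $e_{A<B} := \max_{i,j\in A} D_{ij} < \max_{k,\ell\in A\cup B} D_{k\ell}$, which after a change of coordinates is a comparison of two among the $N$ entries of the point. The key leverage is that moving the candidate optimum by $\delta e_c$ in a single coordinate direction $c$ changes the objective in a controlled, essentially linear way near the optimum, and the "cost" of flipping one coordinate comparison against the majority is bounded by the number of input trees disagreeing, versus a gain proportional to the rest.

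**First main step (Pareto-type bound).** Suppose a nesting $A<B$ appears in more than a $(1 - 1/N)$ fraction of the input trees, i.e.\ in at least $m - \lfloor (m-1)/N\rfloor$ of them, but the tropical median $x^\star$ does \emph{not} display it. I would show this contradicts optimality: consider perturbing $x^\star$ in the single coordinate that governs the comparison $e_{A<B}$ (or, more carefully, the direction that moves the relevant entry of the Kapranov/tropical-median point so as to create the nesting). For each input tree $D^{(k)}$ that \emph{does} display $A<B$, the asymmetric distance $d_\triangle(D^{(k)}, x)$ can be shown to strictly decrease, in fact by an amount comparable to the perturbation size times a factor bounded below away from $0$; this uses the piecewise-linear formula $d_\triangle(a,b) = \sum_i (b_i - a_i) - n\min_j(b_j - a_j)$ and the fact that the coefficient of a single-coordinate increase in the sum term is $1$, while the $\min$ term contributes coefficient $0$ or $-1$. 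For each input tree \emph{not} displaying the nesting, $d_\triangle(D^{(k)}, x)$ increases by at most a bounded multiple ($n$, say) of the perturbation. Counting: if the number of "good" trees times the per-tree gain exceeds the number of "bad" trees times the per-tree loss, we get a net strict decrease, contradicting that $x^\star$ is a minimizer. The threshold $1 - 1/N$ (equivalently $1/N$) should fall out precisely as the break-even ratio between the gain coefficient $1$ and the loss coefficient $n-1$ or $N-1$, once one unwinds that a nesting lives in coordinates indexed by $\binom{\cdot}{2}$.

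**Second main step (co-Pareto-type bound).** For the second assertion I would run the symmetric argument: if $A<B$ occurs in \emph{fewer} than a $1/N$ fraction of input trees yet $x^\star$ \emph{does} display it, perturb $x^\star$ in the opposite coordinate direction so as to destroy the nesting (move to the closed halfspace $\max_{i,j\in A}D_{ij} \ge \max_{k,\ell\in A\cup B}D_{k\ell}$, which by Proposition~\ref{prop:consensusPareto}'s proof is $\max$-tropically convex). Now the roles reverse: the few trees displaying $A<B$ each cost a bounded increase, while the many trees not displaying it each yield a strict decrease, and the same counting gives a contradiction below the $1/N$ threshold. One should also handle the boundary/ties (proportion exactly $1/N$ or exactly $1 - 1/N$) — the strict inequalities in the statement mean these edge cases are excluded, so no separate treatment is needed, though one must be careful that "strict decrease" survives when the perturbation crosses a breakpoint of the piecewise-linear objective; taking $\delta$ small enough that no other coordinate comparison flips handles this.

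**Expected main obstacle.** The delicate point is pinning down the exact per-tree gain and loss coefficients so that the break-even fraction is \emph{exactly} $1/\binom{n}{2}$ rather than some cruder bound. This requires a careful local analysis of $d_\triangle(D^{(k)}, x)$ as a function of a one-coordinate move of $x$: one must identify, for a tree displaying the nesting versus not, which of the $N$ coordinates of $x - D^{(k)}$ realizes the minimum (the $\min_j$ term), because the gain coefficient is $1$ only when the moved coordinate is not the unique argmin, and it degrades otherwise. Equivalently, this is the first-order optimality condition for the $\lambda$-splitter (cf.\ the Tokuyama–Nakano partition condition invoked in Example~\ref{eg:transport}): the median balances the $N$ sectors, so tilting one coordinate redistributes "mass" across exactly $N$ regions, which is the source of the $\binom n2 = N$ in the bound. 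I would likely phrase the whole argument in terms of that balancing condition rather than raw perturbations, as it makes the constant transparent; the routine but fiddly part is translating the nesting inequality \eqref{eq:nesting} into the right coordinate direction in $\torus{\binom{\sX}{2}}$ and checking the sector count is exactly $\binom n2$.
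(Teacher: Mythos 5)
Your quantitative engine is the right one --- under a move $x\mapsto x-\delta e_k$ the term $d_\triangle(D,x)$ changes by exactly $-\delta$ when $k\notin\argmin_j(x_j-D_j)$ and by at most $+\bigl(\tbinom n2-1\bigr)\delta$ otherwise, which is where the break-even ratio $1/\tbinom n2$ comes from --- but the dichotomy you attach it to is wrong, and this is a genuine gap. Whether a given input tree's term decreases under your perturbation is governed by the argmin (sector) structure of $x^\star-D^{(k)}$, not by whether $D^{(k)}$ displays the nesting: a tree displaying $A<B$ can perfectly well have the moved coordinate as its unique argmin, in which case its term \emph{increases} by $\bigl(\tbinom n2-1\bigr)\delta$. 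Moreover, a nesting is not a single-coordinate comparison: condition \eqref{eq:nesting} compares maxima over the coordinate sets $\tbinom A2$ and $\tbinom{A\cup B}2$, so ``the single coordinate that governs the comparison'' is not well defined, and creating or destroying the nesting may require a non-infinitesimal move, which your ``take $\delta$ small enough that no comparison flips'' remark does not cover. As written, the per-tree counting therefore does not close, and you yourself flag the constant as unresolved.

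The missing intermediate statement is: every tropical median lies in $\tconv^{\max}$ of \emph{any} sub-collection comprising more than a $1-1/\tbinom n2$ proportion of the input trees. This is precisely what the paper imports from \cite[Proposition~5.6]{Comaneci+Plastria}, after identifying the essential hull for the regular-simplex gauge with the $\max$-tropical convex hull; your perturbation proves it directly if you choose the descent coordinate via the covering criterion rather than via the nesting inequality: if $x^\star\notin\tconv^{\max}(G)$, then \cite[Proposition~5.37]{ETC} provides a coordinate $k$ with $k\notin\argmin_j(x^\star_j-D_j)$ for every $D\in G$, and moving by $-\delta e_k$ decreases each term over $G$ by $\delta$ while increasing each remaining term by at most $\bigl(\tbinom n2-1\bigr)\delta$, a strict descent whenever $\card{G}>\bigl(1-1/\tbinom n2\bigr)m$, contradicting optimality. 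With that statement in hand, both assertions follow from Remark~\ref{rmk:geomPareto} exactly as in Proposition~\ref{prop:consensusPareto}: $\cT_\sX(A<B)$ and $\cT_\sX(A\not<B)$ are $\max$-tropically convex, so the median, lying in the tropical convex hull of the displaying (respectively non-displaying) supermajority, displays (respectively does not display) the nesting. So the fix is structural: route the argument through ``median $\in\tconv^{\max}$ of every supermajority subset'' plus tropical convexity of the nesting regions, rather than trying to link the descent direction to the nesting inequality tree by tree.
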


\begin{proof}
    The tropical median corresponds to the Fermat--Weber problem whose gauge distance is given by the regular simplex.
    Therefore, the essential hull of a finite set $A$ defined in \cite{Comaneci+Plastria} coincides with the $\max$-tropical convex hull of $A$.
    Then the conclusion follows from \cite[Proposition~5.6]{Comaneci+Plastria} and Remark~\ref{rmk:geomPareto}, as in the proof of Proposition~\ref{prop:consensusPareto}.
\end{proof}

\begin{remark}
    Note that a consensus method is not well-defined when there are multiple minimum points.
    Most problematic is the situation when different tree topologies are possible, when it is unclear how to resolve incompatible optimum trees.
    Yet, this is not the case when the set of optimal locations is convex~\cite[Proposition~6]{uglyPaper}: separating the tree space in cones of trees having a tree topology gives rise to a convexly disjoint collection in the sense of \cite[Definition~1.15]{HKT:2009}.

    Nonetheless, the aforementioned proposition applies when the set of all optima in $\torus{\binom{n}{2}}$ is contained in $\cT_\sX$; guaranteed for strictly $\triangle$-star-quasiconvex dissimilarities.
    Otherwise, one might still have problems in defining consistently a consensus method; see \cite[Example~24]{TropMedian} for the symmetric tropical Fermat--Weber problem.
    For this reason, one has to consider the regularized versions discussed in \textsection\ref{subsec:regular}.    
\end{remark}

\section{Conclusion and future perspectives} \label{sec:conclusion}

\noindent
We provided a large class of location estimators whose value lies in the $\max$-tropical convex hull of the input with the purpose of obtaining consensus methods with good properties.
The first direction would be to obtain methods to obtain the optima efficiently.
On the other hand, searching for extra properties of specific location problems could be helpful for applications; more details are provided below.



\subsection{Comparison to consensus methods based on the BHV distance}
We have exploited tropical convexity to obtain consensus methods with good properties.
More precisely, we focused on (co-)Pareto properties that can be interpreted in a purely geometric way.
The associated spaces are also $\max$-tropically convex so the aforesaid properties are immediate for the tropical approach.  

Although the BHV geometry of the tree space is more studied than its tropical counterpart, there are few consensus methods proposed for this geometry.
A first proposal was given in the pioneering paper by Billera, Holmes, and Vogtmann \cite{BHV}, but a few drawbacks were already pointed out: e.g., doubling every input tree changes the output.
An approach based on Fr\'{e}chet means was proposed by Miller et~al.~\cite{MOP:2015} and Ba{\v{c}}{\'a}k~\cite{Bacak:2014}.
It is also Pareto and co-Pareto on splits \cite[Lemma~5.1]{MOP:2015}, but the result is more intricate.
The same properties hold for Fermat--Weber and center problems in the BHV space \cite[Chapter~3]{Botte:2019}.
The approach is again analytical, but similar for all the cases.
One could try a geometric approach, as in the tropical case, as it could lead faster to identification of self-consistent properties for consensus methods.



\subsection{Majority rules in consensus methods}
Proposition~\ref{prop:majRuleTropMedian} provides a supermajority rule for tropical median consensus with respect to nestings.
This can be a step towards understanding the relationship between median weighted trees and the widely used majority-rule consensus for unweigthed trees.
In fact, the majority-rule consensus can be interpreted as a median~\cite{Margush+McMorris}, but it is unclear if this can be extended to weighted phylogenies.

However, Proposition~\ref{prop:majRuleTropMedian} provides a large threshold for a majority rule in the case of tropical median consensus trees, indicating that they are quite conservative.
This seems to be owing to the low breakdown point of the tropical median caused by asymmetry; check \cite{Comaneci+Plastria} for more details.
Therefore, an investigation of location estimators with higher breakdown point could provide a better connection to the majority-rule consensus.




\subsection{Compositional data}

A different application of our location estimators could be to compositional data~\cite{CompData}.
That is, the data can be seen as points in a simplex; our methods would be applied to the centered logratio transform of the input.
Note that $\triangle$-star-quasiconvex sets are defined with respect to special directions, which correspond to the vertices of the simplex.

What is more, the motivation of Tokuyama and Nakano in studying algorithms for transportation problem came from splitting the points from a simplex in multiple regions~\cite{Tokuyama+Nakano:1995}.
Moreover, Nielsen and Sun analyzed clustering methods with the symmetric tropical distance on compositional data showing a better performance than other more commonly used dissimilarity measures~\cite{Nielsen+Sun:2019}.
These results suggest that $\triangle$-star-quasiconvex dissimilarities could be useful in compositional data analysis.

\subsection*{Acknowledgments}
I am indebted to Michael Joswig for discussing different aspects of this paper.
I thank G\"{u}nter Rote for bringing \cite{AHA:1998} to my attention. 
The author was supported by \enquote{Facets of Complexity} (GRK 2434, project-ID 385256563).
\printbibliography

\section*{Appendix A: Convex analysis on $\torus{n}$}

We state and proof a slightly more general form of  Proposition~\ref{prop:exactRegSol} and then we put an Euclidean structure on $\torus{n}$ to show how we can obtain a quantitative result for the regularized version of the tropical Fermat--Weber problem.

\subsection*{The proof of Proposition~\ref{prop:exactRegSol}}

We will prove the result in a finite-dimensional real vector space $X$.
We will equip it with an inner product $\langle\cdot,\cdot\rangle$ which gives an isomorphism $X^*\cong X$.
In this way, we can see the subgradients of a convex function as elements of $X$.
We recall that the subdifferential of a convex function $f:X\to\RR$ at a point $x$ is the set
\[\partial f(x)=\left\{c\in X:f(y)-f(x)\geq\langle c,y-x\rangle\ \forall y\in X\right\}.\]
It will be used to characterize the minima of $f$ through the first-order minimality condition: $x$ is a minimum of $f$ if and only if $\0\in\partial f(x)$.
We refer to the book by Rockafellar~\cite{Rockafellar:70} for more details on convex analysis.

We are interested in optima of regularized versions of $f$ of the form $f+\lambda h$ with $h$ having linear growth.
More specifically, we care of $h$ being \emph{Lipschitz continuous}, i.e. there exists a constant $L>0$ such that $|h(x)-h(y)|\leq L\|x-y\|$ for every $x,y\in X$, where $\|\cdot\|$ is any norm on $X$.\footnote{We assumed that $X$ is finite-dimensional, so every two norms are equivalent. Thus, the definition does not depend on the specific norm. Nevertheless, the constant $L$ depends on $\|\cdot\|$.}

As a last definition, we say that $h$ is polyhedral convex if it is the maximum of finitely many affine functions on $X$.
Now we can state and proof a slight generalization of Proposition~\ref{prop:exactRegSol}.

\begin{proposition} \label{prop:minimaRegLips}
    Let $h:X\to\RR$ be a polyhedral convex function and $f:X\to\RR$ convex and Lipschitz continuous.
    Then there exists a constant $\lambda_0>0$ such that the minima of $h+\lambda f$ are also the minima of $h$ for every $\lambda\in(0,\lambda_0)$.
\end{proposition}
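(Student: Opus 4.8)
The plan is to reduce everything to a single quantitative fact about polyhedral convex functions, namely a \emph{weak sharp minimum} estimate. Write $v=\min h$ (we may assume $h$ attains its minimum, otherwise there is nothing to speak of) and let $M=\argmin h$, which is a non-empty polyhedron because $h$ is polyhedral convex. First I would establish that there is a constant $\mu>0$ with
\[
    h(x)-v\ \ge\ \mu\,\dist(x,M)\qquad\text{for all }x\in X,
\]
where $\dist$ is measured in the Euclidean norm $\|\cdot\|$ coming from $\langle\cdot,\cdot\rangle$. Writing $h=\max_{i}\ell_i$ as a maximum of finitely many affine functions $\ell_i$, the set $M$ is exactly the solution set of the consistent linear system $\ell_i(x)\le v$; Hoffman's error bound then gives a constant $C>0$ with $\dist(x,M)\le C\max_i(\ell_i(x)-v)^+=C\,(h(x)-v)$ for every $x$, since $h(x)\ge v$ always. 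Taking $\mu=1/C$ proves the estimate. Equivalently, and in the spirit of the appendix, this says that every subgradient of $h$ at a point outside $M$ has norm at least $\mu$, as one sees by testing the subgradient inequality against the nearest point of $M$.

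Next I would use that $f$, being convex and $L$-Lipschitz on all of $X$, satisfies $|f(x)-f(y)|\le L\|x-y\|$ (equivalently, all its subgradients have norm $\le L$). I set $\lambda_0:=\mu/L$, fix $\lambda\in(0,\lambda_0)$, and let $x$ be any minimizer of $h+\lambda f$; the goal is $x\in M$. Choose $p\in M$ with $\|x-p\|=\dist(x,M)$, which exists since $M$ is closed and non-empty and $X$ is finite-dimensional. From $(h+\lambda f)(x)\le(h+\lambda f)(p)$ and $h(p)=v$ I get
\[
    h(x)-v\ \le\ \lambda\bigl(f(p)-f(x)\bigr)\ \le\ \lambda L\,\|p-x\|\ =\ \lambda L\,\dist(x,M).
\]
Combining this with the weak sharp minimum estimate yields $\mu\,\dist(x,M)\le\lambda L\,\dist(x,M)$, i.e.\ $(\mu-\lambda L)\,\dist(x,M)\le 0$. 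As $\lambda<\lambda_0=\mu/L$ the factor $\mu-\lambda L$ is strictly positive, so $\dist(x,M)=0$ and hence $x\in M=\argmin h$. Thus every minimizer of $h+\lambda f$ minimizes $h$, which is the assertion. Note that the argument never presupposes that $h+\lambda f$ has a minimizer: if it has none the statement is vacuous, and otherwise the computation applies verbatim.

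The only non-routine ingredient is the weak sharp minimum estimate for the polyhedral function $h$, and that is where I would be careful: I would make sure the linear system cutting out $M$ is consistent (it is, since $M\neq\emptyset$) so that Hoffman's lemma applies, and I would be explicit that the resulting constant depends only on $h$, not on $f$ or $\lambda$, so that a single $\lambda_0$ works. Everything after that is the two displayed lines above.
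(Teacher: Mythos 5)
Your argument is correct, and it takes a genuinely different route from the paper's. The paper argues dually, via first-order conditions: a minimizer $m_\lambda$ of $h+\lambda f$ satisfies $\0\in\partial h(m_\lambda)+\lambda\partial f(m_\lambda)$; Lipschitz continuity of $f$ places all subgradients of $f$ in one bounded set $B$, and polyhedrality of $h$ means $\partial h(x)$ takes only finitely many values, so there is a single $\lambda_0>0$ with $\0\notin P+\lambda B$ for every value $P$ of $\partial h$ not containing $\0$ and every $\lambda<\lambda_0$; hence $\0\in\partial h(m_\lambda)$. You argue primally: Hoffman's error bound gives the weak sharp minimum inequality $h(x)-\min h\geq\mu\,\dist(x,\argmin h)$, and comparing objective values at a minimizer $x$ of $h+\lambda f$ and at its nearest point in $\argmin h$ forces $\dist(x,\argmin h)=0$ once $\lambda<\mu/L$. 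Both routes are quantitative: yours yields the explicit threshold $\lambda_0=\mu/L$ with $\mu$ a Hoffman constant; the paper's yields $\lambda_0$ in terms of the distance from $\0$ to the finitely many subdifferential polytopes of $h$ not containing $\0$, which is the form the appendix exploits to get $\lambda_0\geq 1$ for the regularized tropical Fermat--Weber problem. Your observation that the weak sharp minimum property is equivalent to a uniform lower bound $\mu$ on the norms of subgradients of $h$ off $\argmin h$ is exactly the bridge between the two proofs.

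One small caveat: the parenthetical ``we may assume $h$ attains its minimum, otherwise there is nothing to speak of'' is not a complete dismissal. If $\argmin h=\emptyset$, the proposition still asserts something, namely that $h+\lambda f$ has no minimizers either for small $\lambda$; this is true but needs a line: a polyhedral convex function that is bounded below attains its minimum, and if $h$ is unbounded below it decreases at some linear rate $c>0$ along a ray, so $h+\lambda f$ is also unbounded below whenever $\lambda L<c$. The paper's subdifferential argument sidesteps this case automatically; with that one line added, your proof is complete.
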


\begin{proof}
    Consider an arbitrary minimum $m_\lambda$ of $h+\lambda f$.
    The first-order optimality condition entails $\0\in\partial h(m_\lambda)+\lambda\partial f(m_\lambda)$.
    What is more, since $f$ is Lipschitz continuous, \cite[Lemma~2.6]{Shalev-Shwartz:2011} yields the existence of a bounded set $B$ such that ${\partial f(x)\subset B}$ for all $x\in X$.

    If $\0\notin\partial h(x)$, then $\0\notin\partial h(x)+\lambda B$ for $\lambda$ sufficiently small, as $\partial h(x)$ is closed.
    We also know that there are finitely many values for $\partial h(x)$, as we assumed $h$ is a polyhedral convex function.
    Accordingly, there exists $\lambda_0>0$ such that $\0\notin\partial h(x)+\lambda B$ for every $\lambda\in(0,\lambda_0)$.
    The last relation implies that $\0\in\partial h(m_\lambda)$ if $\lambda<\lambda_0$, which is equivalent to $m_\lambda$ being a minimum of $h$. 
\end{proof}

\begin{remark} \label{rmk:quantLambda}
    If we know the bounded set $B$ from the proof of Proposition~\ref{prop:minimaRegLips}, then we can set $\lambda_0=\sup\{\lambda>0:\0\notin P+\lambda B,\ \forall P\in\cP\}$ 
    where $\cP$ is the set of all possible values of $\partial h(x)$ such that $\0\notin\partial h(x)$. 
    The infimum is positive, as $\mathcal P$ is a finite collection of closed convex sets.

    If $h$ is a gauge $\gamma$, then \cite[Lemma~2.6]{Shalev-Shwartz:2011} says that we can set $B=\{x\in X:{\gamma^\circ(x)\leq r}\}=:rB_{\gamma^\circ}$ for some $r>0$ where $\gamma^\circ(y):=\sup_{x:\gamma(x)\leq 1}\langle x,y\rangle$ is the dual gauge.
    Hence, $P+\lambda B$ represents the set of points at distance at most $\lambda r$ from $P$ measured by the distance $d_{\gamma^\circ}$ induced from $\gamma^\circ$, i.e. $d_{\gamma^\circ}(x,y)=\gamma^\circ(y-x)$.
    Consequently, we have $\lambda_0=\inf_{P\in\cP}d_{\gamma^\circ}(P,\0)/r$.
\end{remark}

\subsection*{Euclidean structure on $\torus{n}$}

We just conclude with explaining how we can put a Euclidean structure on $\torus{n}$ in a natural way.
The idea is to identify the tropical projective torus with a hyperplane of $\RR^n$ with the regular Euclidean structure.
Using this idea, by factoring with $\RR\1$, one can identify $\torus{n}$ with the orthogonal subspace to $\1$, which is $\cH=\{x\in\RR:x_1+\dots+x_n=0\}$.
This identification is natural as we obtain the same subdifferentials of a convex function $f:\torus{n}\to\RR$ as in the case when we consider it as a function on $\RR^n$ such that $f(x+\lambda\1)=f(x)$ for each $x\in\RR^n$ and $\lambda\in\RR$. 



Having fixed this structure, we search for $\lambda_0$ as in Proposition~\ref{prop:minimaRegLips} for $h(x)=\sum_i \gamma_\infty({x-v_i})$ and $f(x)=\gamma_1(x-v)$ where $v\in\tconv^{\max}(v_1,\dots,v_m)$.
This is, we want quantitative results for regularizations of tropical Fermat--Weber problems.

In this case, the subdifferentials of $h$ are integer polytopes in $\cH$.
Moreover, one can check that the dual gauge of $\gamma_1$ has the expression $\gamma_1^\circ(x)=\gamma_1(-x)/n$ which takes integer values at each point of $\cH\cap\ZZ^n$.
Consequently, $\lambda_0=\inf_{P\in\cP}d_{\gamma_1^\circ}(P,\0)\geq 1$ as it is a positive integer.
Whence, the minima of $h+\lambda f$ are also minima of $h$ for every $\lambda\in(0,1)$.



\end{document}